\renewcommand\@biblabel[1]{#1.}
\newenvironment{keywords}{
  \vspace{2mm}
  \noindent
  \keywordsname: 
  \itshape\small
}
\def\keywordsname{\textbf{Keywords}}
  \def\mathsubclassname{\textbf{2010 AMS Subject Classification}}
\newtheorem{theorem}{Theorem}[section]
\newcommand\be{\begin{equation}}
\newcommand\ee{\end{equation}}
 \newtheorem{thm}{Theorem}[section]
 \newtheorem{lemma}[thm]{Lemma}
 \theoremstyle{definition}
 \theoremstyle{remark}
 \newtheorem{remark}[thm]{Remark}
 \numberwithin{equation}{section}
\begin{document}

\title{A free boundary problem for a diffusion-convection equation}

\author{
Adriana C. Briozzo$^{1}$,  Domingo A. Tarzia $^{1}$\\ \\
\small {{$^1$} Depto. Matem\'atica - CONICET, FCE, Univ. Austral, Paraguay 1950} \\  
\small {S2000FZF Rosario, Argentina.}\\
\small{Email: Abriozzo@austral.edu.ar; DTarzia@austral.edu.ar.} 
}
\date{}

\maketitle



\begin{abstract}
One-dimensional free boundary problem for a nonlinear diffusion - convection equation with a Dirichlet condition at fixed face $x=0$, variable in time, is considered. Throught several transformations the problem is reduced  to a free boundary problem for a diffusion equation and the integral formulation  is obtained. By using fixed point theorems, the existence of at least a solution, for small time, to a system of coupled nonlinear integral equations is obtained.

\end{abstract}
\begin{keywords}
Diffusion-convection equation;  free boundary problem; nonlinear integral equation 
\end{keywords}

\maketitle
\section{Introduction}

Free boundary problems (FBP) are of great importance, both physically and mathematically.  FBP are boundary value problems for partial differential equations where an unknown moving boundary must be determined \cite{AlSo,Ca,Ru,Ta}. 
In this paper, we formulate a FBP for a nonlinear diffusion-convection equation namely Rosen-Fokas-Yorstos equation \cite{FoYo,Ro}. This equation describes fluid diffusion with convective effects in porous media and has multiple applications, for example, to ground water hydrology, oil reservoir engineering and other biological applications as the drug propagation in the arterial tissues.

In \cite{BrTa2019,BuDeLiFi2018} a FBP on a finite interval is formulated and solved for a nonlinear diffusion-convection equation which describe drug diffusion in arterial tissues after the drug is released by an arterial stent and the problem is reduced to a system of nonlinear integral equations.

We will study a one-dimensional FBP for the diffusion-convection equation with a variable Dirichlet condition(which is the one novelty with respect to \cite{BrTa2019,BuDeLiFi2018}) at the fixed face x=0 and a Stefan like condition on the free boundary which has a convective term.   
The present paper is organized as follows: In Section 2, we introduce the FBP  and through several transformations we map the  FBP for the nonlinear diffusion-convection equation into an equivalent FBP for the linear heat-diffusion equation. In Section 3, we give an equivalent integral formulation to problem which requires to solve a system of three coupled nonlinear Volterra integral equations. Section 4 is subdivided into two subsections: in subsection 4.1 , fixed one unknown, we prove existence and uniqueness of the solution, local in time, by using Banach fixed point theorem, in subsection 4.2 we use the Schauder fixed point theorem to prove that there exists at least one solution of this unknown. 

We can remark that sequential transformations used on Section 2 have been previously used in different physical context as modelled, in particular, by moving boundary problems, for example \cite{Br1990,BrTa1998,Fo,FoRoSc,Ro,Ro2019,RoBr,RoStCl}

\section{Free boundary problem}
We consider the free boundary $s=s(t)>0$, defined for $t>0$, and $u(x,t) $
which satisfy a diffusion-convection equation with the following conditions:
\begin{equation}
u_{t}=u^{2}(Du_{xx}-u_{x})\;\;\;,\;\;0<x<s(t)\;\;,\;\;t>0\;\;, \label{calor1}
\end{equation}
\begin{equation}
u(0,t)=f(t)\;,\;\;t>0\;\;,  \label{calor110}
\end{equation}
\begin{equation}
u(s(t),t)=\beta>0\;,\;\;t>0\;\;,  \label{tf}
\end{equation}
\begin{equation}
Du_{x}(s(t),t)-u(s(t),t)=-\dot{s}(t)\;,\;\;t>0\;\;,\label{stefan}
\end{equation}
\begin{equation}
u(x,0)=u_{0}(x)>\beta \;,\;\;0\leq x \leq b\;\;,  \label{t2}
\end{equation}
\begin{equation}
s(0)=b\;  \label{tempborde}
\end{equation}
where $D$ is the diffusivity, $u_{0}$ is the initial concentration and  $f=f(t)$ is the concentration in the fixed face $x=0$.
We assume that:
\begin{equation}
f\in C^{1}[0,\sigma],\quad u_{0}\in C^{1}[0,b],\quad u_{0}(0)=f(0),\quad u_{0}(b)=\beta, \quad f(t)>\frac{3\beta}{2}\label{hip}
\end{equation}
Following \cite{BrTa2019,BuDeLiFi2018,Ro} we will transform this problem in the one which is governed by the Burgers equation. We have:
\begin{lemma}
A) 
If $u=u(x,t)$, $s=s(t)$ is a solution to the problem (\ref{calor1})-(\ref{tempborde}) then $v=v(z,t)$, $z_{0}(t)$, $z_{1}(t)$ defined by:
\begin{equation}
v(z,t)=u(x,t),\label{firsttrans}
\end{equation}where
\begin{equation}
z(x,t)= C_{1}+\int_{0}^{t}\left(u(0,\tau)-Du_{x}(0,\tau)\right)d\tau + \int_{0}^{x}\frac{1}{u(\eta,t)}d\eta\label{zeta}
\end{equation}
\begin{equation}
z_{0}(t)=z(0,t)=C_{1}+\int^{t}_{0} \left(f(\tau)-D\frac{v_{z}(z_{0}(\tau),\tau)}{f(\tau)}\right)d\tau\label{zeta0}
\end{equation}

\begin{equation}
z_{1}(t)=z(s(t),t)=C_{2}+(\beta+1)t-\frac{D(\beta +1)}{\beta^{2}}\int^{t}_{0} v_{z}(z_{1}(\tau),\tau)d\tau\label{zeta1}
\end{equation}
with $C_{1}$ an arbitrary constant, is a solution to the problem given by the Burgers equation 
\begin{equation}
v_{t}=Dv_{zz}-2vv_{z}\;\;\;,\;\;z_{0}(t)<z<z_{1}(t)\;\;,\;\;t>0\;\;, \label{calor11}
\end{equation} 
with the following initial and boundary conditions: 

\begin{equation}
v(z_{0}(t),t)=f(t)\;,\;\;t>0\;\;,  \label{calor111}
\end{equation}
\begin{equation}
v(z_{1}(t),t)=\beta\;,\;\;t>0\;\;,  \label{tf1}
\end{equation}
\begin{equation}
D\frac{v_{z}(z_{1}(t),t)}{v(z_{1}(t),t)}-v(z_{1}(t),t)=-\frac{\beta}{\beta +1}\dot{z}_{1}(t)\;,\;\;t>0\;\;,\label{stefan1}
\end{equation}
\begin{equation}
v(z,0)=v_{0}(z)\;,\;\;C_{1}\leq z\leq C_{2}\;\;,  \label{t21}
\end{equation}
\begin{equation}
z_{0}(0)=C_{1}\;,\;\;z_{1}(0)=C_{2} \label{tempborde1}
\end{equation}

where 
\begin{equation}
v_{0}(z)=u_{0}(g^{-1}(z)),\quad\quad g(x)=C_{1}+\int^{x}_{0}\frac{1}{u_{0}(\eta)}d\eta
\end{equation}
\begin{equation}
C_{2}=C_{1}+U_{0}=C_{1}+\int^{b}_{0}\frac{1}{u_{0}(\eta)}d\eta\label{c2}
\end{equation} 
and the constants $b$, $C_{1}$ and $C_{2}$ satisfy the following relation
\begin{equation}
b=\int^{C_{2}}_{C_{1}}v_{0}(z) dz
\end{equation}
B) Conversely if $v=v(z,t)$, $z_{0}(t)$, $z_{1}(t)$ is the solution to the problem  $(\ref{calor11})-(\ref{tempborde1})$ then $u=u(x,t)$, $s=s(t)$ given by 
\begin{equation}
u(x,t)=v(z,t),\label{secondtrans}
\end{equation}
 with
\begin{equation}
x(z,t)= \int_{z_{0}(t)}^{z} v(\eta,t)d\eta,\label{equis}
\end{equation}
\begin{equation}
s(t)=x(z_{1}(t),t)=\int_{z_{0}(t)}^{z_{1}(t)} v(\eta,t)d\eta\
\label{yy}\end{equation} is a solution to the problem $(\ref{calor1})-(\ref{tempborde})$.
\end{lemma}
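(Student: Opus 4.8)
The plan is to treat (\ref{zeta}) as a von Mises--type hodograph change of variables and to verify the transformed problem by the chain rule, the crucial simplifications coming from substituting the original PDE. The whole argument reduces to computing two partial derivatives of $z$ and then propagating them through every condition.

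First I would record the two derivatives of $z$, regarded as a function of $(x,t)$. Differentiating (\ref{zeta}) in $x$ gives immediately $z_x=1/u$, so $z$ is strictly increasing in $x$ (positivity $u\ge\beta>0$ is exactly what makes the map invertible). For $z_t$ I would differentiate (\ref{zeta}) in $t$ by Leibniz's rule and rewrite $\partial_t(1/u)=-u_t/u^2=-(Du_{xx}-u_x)$ using (\ref{calor1}); the resulting integrand is an exact $x$-derivative, so the integral telescopes and, after cancelling against the built-in boundary term $u(0,t)-Du_x(0,t)$, yields the clean identity $z_t=u-Du_x$. This telescoping is the computational heart of the argument. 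With these two identities the PDE is routine: from $v(z,t)=u(x,t)$ I get $v_z=uu_x$ and $v_{zz}=uu_x^2+u^2u_{xx}$, while differentiating in $t$ at fixed $x$ gives $v_t=u_t-v_z z_t$; substituting $z_t=u-Du_x$ and (\ref{calor1}) for $u_t$, all terms reassemble into $Dv_{zz}-2vv_z$, which is (\ref{calor11}).

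The conditions follow by evaluation. The Dirichlet data (\ref{calor111}), (\ref{tf1}) are just the definitions of $z_0,z_1$ with $v(z,t)=u(x,t)$. On $x=s(t)$ I use $z_x=1/\beta$ and, after replacing $Du_x(s,t)$ via the Stefan condition (\ref{stefan}), obtain $z_t(s(t),t)=\dot s(t)$; hence $\dot z_1=z_x\dot s+z_t=\tfrac{1+\beta}{\beta}\dot s$. Substituting $u_x(0,t)=v_z(z_0,t)/f$ and $u_x(s,t)=v_z(z_1,t)/\beta$ then turns these into the ODEs (\ref{zeta0}), (\ref{zeta1}) upon integration, and turns (\ref{stefan}) into (\ref{stefan1}). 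At $t=0$ the time-integral in (\ref{zeta}) vanishes, so $z(x,0)=g(x)$, giving $v_0=u_0\circ g^{-1}$, the range $[C_1,C_2]$, and $b=\int_{C_1}^{C_2}v_0$ by the change of variable $z=g(x)$, $dz=dx/u_0$. For the converse (Part~B) I would run the same computation through the inverse map (\ref{equis}): differentiating gives $x_z=v$, hence $z_x=1/v=1/u$; for $x_t$ I again use Leibniz (now with moving lower limit $z_0(t)$), replace $v_t$ by $Dv_{zz}-2vv_z$, telescope, and cancel the boundary term using (\ref{zeta0}) to get $x_t=Dv_z-v^2$. Then $z_t=u-Du_x$ follows from $x(z(x,t),t)=x$, and the PDE, Dirichlet, Stefan and initial conditions for $(u,s)$ come out by reversing the Part~A manipulations, with $s(0)=\int_{C_1}^{C_2}v_0=b$.

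I expect the main obstacle to be bookkeeping rather than conceptual: correctly handling the Leibniz boundary terms at the moving endpoints (especially in deriving $\dot z_1$ and in the $x_t$ computation of Part~B), and checking that each telescoped integral cancels precisely the time-dependent term hard-coded into the definitions (\ref{zeta}), (\ref{zeta0}). One should also note throughout that $u>0$ (respectively $v>0$) is needed for the change of variables to be a genuine bijection, so that ``solution of one problem $\Leftrightarrow$ solution of the other'' is meaningful.
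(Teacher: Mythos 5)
Your proposal is correct and follows essentially the same route as the paper: both verify the transformation by the chain rule, recording $z_x=1/u$, $z_t=u-Du_x$ (your telescoping argument is the computation the paper states without detail), deriving the Burgers equation, obtaining $\dot z_1=\tfrac{\beta+1}{\beta}\dot s$ to get (\ref{stefan1}) and (\ref{zeta1}), and proving the converse via $x_z=v$, $x_t=Dv_z-v^2$. The only cosmetic difference is that you invoke ``reversing the Part A manipulations'' for the Stefan condition in Part B, where the paper computes $\dot s(t)$ explicitly from (\ref{yy}); the substance is the same.
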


\begin{proof}
A) From $(\ref{firsttrans})$, $(\ref{zeta})$ and by $(\ref{calor1})$ we have 
\[
z_{x}=\tfrac{1}{u(x,t)}=\tfrac{1}{v(z,t)},\quad z_{t}=u(x,t)-Du_{x}(x,t)=v(z,t)-D\tfrac{v_{z}(z,t)}{v(z,t)},
\]
and
\[ u_{x}(x,t)=\tfrac{v_{z}(z,t)}{v(z,t)},\quad u_{xx}(x,t)=\tfrac{v_{zz}(z,t)}{v^{2}(z,t)}-\tfrac{v^{2}_{z}(z,t)}{v^{3}(z,t)}, \]

\[ u_{t}(x,t)=v_{t}(z,t)+v_{z}\left(v(z,t)-D\tfrac{v_{z}(z,t)}{v(z,t)}\right).
\]
Then, from (\ref{calor1}) we get  (\ref{calor11}) which is the Burgers equation for the dependent variable $v(z,t)$.

 Taking into account $(\ref{zeta})$  the domain $D=\left\lbrace(x,t)/0<x<s(t), t>0\right\rbrace$ for $u(x,t)$ is transformed into the domain $D^{*}=\left\lbrace(z,t)/z_{0}(t)<z<z_{1}(t),t>0)\right\rbrace$ for $v(z,t)$, where $z_{0}(t)$ and $z_{1}(t)$ are given by 
\[
z_{0}(t)=z(0,t)=C_{1}+\int_{0}^{t}\left(u(0,\tau)-Du_{x}(0,\tau)\right)d\tau 
\]

\[
z_{1}(t)=z(s(t),t)=C_{1}+\int_{0}^{t}\left(u(0,\tau)-Du_{x}(0,\tau)\right)d\tau + \int_{0}^{s(t)}\frac{1}{u(\eta,t)}d\eta\]

If we derivate $z_{1}$ respect to variable $t$ and we use $(\ref{calor1})$ and the conditions $(\ref{calor110})$-$(\ref{tempborde})$, we obtain the follow relation

\[
\dot{z}_{1}(t)=\tfrac{\beta+1}{\beta}\dot{s}_{1}(t).
\]
Then, from $(\ref{stefan})$ we have $(\ref{stefan1})$ and the expression $(\ref{zeta1})$ for $z_{1}(t)$, where $z_{1}(0)=C_{1}+\int_{0}^{b}\frac{1}{u_{0}(\eta)}d\eta=C_{2}$.
Equations $(\ref{calor111})$ and $(\ref{tf1})$ follows inmediatly from $(\ref{calor110})$ and $(\ref{tf})$ respectively.

For $t=0$ we have that
\[z=C_{1}+ \int_{0}^{x}\frac{1}{u_{0}(\eta)}d\eta=g(x),
\] 
then $(\ref{t2})$ is equivalent to $v(z,0)=u_{0}\left(g^{-1}(z)\right)$ for $C_{1}\leq z \leq C_{2}$ where $C_{2}=C_{1}+ \int_{0}^{b}\frac{1}{u_{0}(\eta)}d\eta$. Therefore $(\ref{t21})$ holds.

To prove B) we consider $(\ref{secondtrans})$, $(\ref{equis})$ and the $(\ref{calor11})-(\ref{tempborde1})$ which are satisfied by $v=v(z,t)$, $z_{0}(t)$, $z_{1}(t)$. We have
\[x_{z}=v(z,t), \quad x_{t}=Dv_{z}-v^{2}(z,t).
\]
Moreover, for $z=z_{0}(t)$ is $x=0$ and for $z=z_{1}(t)$ is $x=\int_{z_{0}(t)}^{z_{1}(t)} v(\eta,t)d\eta=s(t).$
Since
\[
v_{t}=Du^{2}_{x}u-u_{x}u^{2}+u_{t}, \quad v_{z}=u_{x}u,\quad v_{zz}=u_{xx}u^{2}+u^{2}_{x} u
\]
then $(\ref{calor11})$ yields $(\ref{calor1}).$ 

The conditions $(\ref{calor110})$, $(\ref{tf})$ and $(\ref{t2})$ follows inmediatly from $(\ref{calor111})$, $(\ref{tf1})$ and $(\ref{t21})$ respectively.

To prove $(\ref{stefan})$, from $(\ref{yy})$ we calculate $\dot{s}(t)$ and use $(\ref{calor11})$ and $(\ref{tf1})$. We have
\[
\dot{s}(t)=v(z_{1}(t),t)\dot{z}_{1}(t)-v(z_{0}(t),t)\dot{z}_{0}(t)+\int_{z_{0}(t)}^{z_{1}(t)} v_t(\eta,t)d\eta\]
\[=\beta-D\frac{v_{z}(z_{1}(t),t)}{\beta}= \beta-Du_{x}(s(t),t)
\]
and $(\ref{stefan})$ holds.

\end{proof}

\begin{remark}Eq. $(\ref{zeta})$ is equivalent to the relations
\begin{equation}
z_{x}=\frac{1}{u(x,t)},\quad\quad\quad z_{t}=u(x,t)-Du_{x}(x,t).
\end{equation}
Eq. $(\ref{equis})$ is equivalent to
\begin{equation}
x_{z}=v(z,t), \quad x_{t}=Dv_{z}-v^{2}(z,t).
\end{equation}

\end{remark} 
Now we introduce the Galilean Transformation given by
\begin{equation}
V(y,t)=v(z,t)-\beta, \quad\quad\quad y=z-2\beta t\quad\quad t>0\label{2}
\end{equation}
to obtain de following result:

\begin{lemma} Under the transformation $(\ref{2})$ the problem (\ref{calor11})-(\ref{c2}) is equivalent to the following FBP:
\begin{equation}
V_{t}=DV_{yy}-2VV_{y}\;\;\;,\;\;y_{0}(t)<y<y_{1}(t)\;\;,\;\;t>0\;\;, \label{cal}
\end{equation}
\begin{equation}
V(y_{0}(t),t)=f(t)-\beta\;,\;\;t>0\;\;,  \label{cal1}
\end{equation}
\begin{equation}
V(y_{1}(t),t)=0\;,\;\;t>0\;\;,  \label{t1}
\end{equation}
\begin{equation}
D\frac{V_{y}(y_{1}(t),t)}{\beta}=\frac{\beta(1-\beta)-\beta\dot{y}_{1}(t)}{\beta +1}\;,\;\;t>0\;\;,\label{ste1}
\end{equation}
\begin{equation}
V(y,0)=V_{0}(y)\;,\;\;C_{1}\leq y \leq C_{2}\;\;,  \label{tt21}
\end{equation}
\begin{equation}
y_{0}(0)=C_{1}\;,\;\;y_{1}(0)=C_{2} \label{temp}
\end{equation}
where
 \begin{equation}
V_{0}(y)=v_{0}(y)-\beta
\end{equation}

\begin{equation}
y_{0}(t)=C_{1}-2\beta t+\int^{t}_{0} \left(f(\tau)-D\frac{V_{y}(y_{0}(\tau),\tau)}{f(\tau)}\right)d\tau\label{f}
\end{equation}

\begin{equation}
y_{1}(t)=C_{2}+(1-\beta)t-\frac{D(\beta +1)}{\beta^{2}}\int^{t}_{0} V_{y}(y_{1}(\tau),\tau)d\tau.\label{fr}
\end{equation}
\end{lemma}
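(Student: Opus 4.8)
The plan is to treat this as an explicit, invertible change of variables and to verify each equation of the transformed problem in turn. First I would express the derivatives of $v$ in terms of those of $V$ by the chain rule. Writing $z=y+2\beta t$, so that $v(z,t)=V(y,t)+\beta$, one obtains $v_z=V_y$, $v_{zz}=V_{yy}$ and $v_t=V_t-2\beta V_y$, all derivatives of $V$ being evaluated at $(y,t)=(z-2\beta t,t)$. With these identities in hand the governing equation $(\ref{cal})$ is immediate: substituting into the Burgers equation $(\ref{calor11})$ gives $V_t-2\beta V_y=DV_{yy}-2(V+\beta)V_y$, and the two $-2\beta V_y$ terms cancel, leaving $V_t=DV_{yy}-2VV_y$. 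This cancellation of the convective shift is precisely the purpose of the Galilean transformation $(\ref{2})$ and requires no further work.

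Next I would transform the free boundaries and the boundary data. Setting $y_0(t)=z_0(t)-2\beta t$ and $y_1(t)=z_1(t)-2\beta t$, conditions $(\ref{cal1})$ and $(\ref{t1})$ follow at once from $(\ref{calor111})$ and $(\ref{tf1})$ together with $V=v-\beta$. The initial datum $(\ref{tt21})$ and the endpoint values $(\ref{temp})$ follow by evaluating the transformation at $t=0$, where $y=z$, so that $V(y,0)=v_0(y)-\beta$ and $y_0(0)=C_1$, $y_1(0)=C_2$.

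To obtain the integral representations $(\ref{f})$ and $(\ref{fr})$ I would simply subtract $2\beta t$ from $(\ref{zeta0})$ and $(\ref{zeta1})$, using $v_z(z_i(\tau),\tau)=V_y(y_i(\tau),\tau)$ for $i=0,1$. In the case of $(\ref{fr})$ the linear term becomes $(\beta+1)t-2\beta t=(1-\beta)t$, which accounts for the change of coefficient relative to $(\ref{zeta1})$; everything else is copied unchanged.

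The one step that demands care is the Stefan condition $(\ref{ste1})$. Starting from $(\ref{stefan1})$ and using that at $z=z_1$ one has $v=\beta$ and $v_z=V_y$, that condition reads $D V_y(y_1,t)/\beta-\beta=-\tfrac{\beta}{\beta+1}\dot z_1(t)$. Substituting $\dot z_1=\dot y_1+2\beta$ and collecting terms over the common denominator $\beta+1$ should produce the numerator $\beta(\beta+1)-2\beta^2-\beta\dot y_1=\beta(1-\beta)-\beta\dot y_1$, which is exactly $(\ref{ste1})$. The main obstacle, such as it is, lies in bookkeeping the powers of $\beta$ correctly in this last manipulation; every other step is a routine substitution following from the chain-rule identities established at the outset.
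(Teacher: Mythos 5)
Your proposal is correct and follows essentially the same route as the paper: the paper simply asserts that the Galilean transformation leaves the Burgers equation invariant and that the boundary, Stefan and free-boundary relations $(\ref{cal1})$--$(\ref{fr})$ transfer directly from $(\ref{calor111})$--$(\ref{tempborde1})$ and $(\ref{zeta0})$--$(\ref{zeta1})$, while you supply the routine chain-rule computations behind those assertions (the cancellation of the $-2\beta V_y$ terms, the shift $(\beta+1)t-2\beta t=(1-\beta)t$, and the algebra $\beta(\beta+1)-2\beta^{2}-\beta\dot y_{1}=\beta(1-\beta)-\beta\dot y_{1}$ for $(\ref{ste1})$), all of which check out; your framing of the map as an explicit invertible change of variables also matches the paper's one-line treatment of the converse direction.
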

\begin{proof}
The Galilean transformation $(\ref{2})$ leaves invariant the Burgers equation $(\ref{calor11})$. The free boundaries $y_{0}(t)$ and $y_{1}(t)$ given by $(\ref{f})$-$(\ref{fr})$ are obtained from $(\ref{zeta0})$-$(\ref{zeta1})$. The conditions $(\ref{cal1})$-$(\ref{temp})$ follows from $(\ref{calor111})$-$(\ref{tempborde1})$.

Conversely, if we define
\[v(z,t)=V(y,t)+\beta, \quad\quad\quad z=y+2\beta t\quad\quad t>0,
\] from (\ref{cal})-(\ref{fr}) we obtain (\ref{calor11})-(\ref{c2}) with $z_{0}(t)$ and $z_{1}(t)$ given by $(\ref{zeta0})$ and $(\ref{zeta1})$ respectively.
\end{proof}

Let us now transform problem $(\ref{cal})-(\ref{fr})$ in the one which is governed by a heat-diffusion equation using the Hopf Cole transformation given by
\begin{equation}
w(y,t)=C(t)V(y,t)\eta(y,t), \quad y_{0}(t)\leq y \leq y_{1}(t)\;\;,\;\;t>0,\label{tres}
\end{equation}with
\begin{equation}
C(t)=1-\int^{t}_{0} w_{y}(y_{1}(\tau),\tau)d\tau,\label{ce}
\end{equation}and
\begin{equation}
\eta(y,t)=exp\left(\tfrac{1}{D}\int_{y}^{y_{1}(t)}V(\xi,t)d\xi\right).\label{eta}
\end{equation}

We have the following result:
\begin{theorem}
Under transformation $(\ref{tres})-(\ref{eta})$ problem $(\ref{cal})-(\ref{fr})$ is equivalent to the free boundary problem $(\ref{calu})-(\ref{free})$ given by:
\begin{equation}
w_{t}=Dw_{yy}\;\;\;,\;\;y_{0}(t)<y<y_{1}(t)\;\;,\;\;t>0\;\;, \label{calu}
\end{equation}
\begin{equation}
w(y_{0}(t),t)=(f(t)-\beta)\left(C(t)+\frac{1}{D}\int_{y_{0}(t)}^{y_{1}(t)}w(\xi,t)d\xi\right)\;,\;\;t>0\;\;,  \label{cal1.}
\end{equation}
\begin{equation}
w(y_{1}(t),t)=0\;,\;\;t>0\;\;,  \label{t1.}
\end{equation}
\begin{equation}
\frac{Dw_{y}(y_{1}(t),t)}{\beta C(t)}=\frac{\beta(1-\beta)-\beta\dot{y}_{1}(t)}{\beta +1}\;,\;\;t>0\;\;,\label{ste1.}
\end{equation}
\begin{equation}
w(y,0)=F(y)\;,\;\;C_{1}\leq y \leq C_{2}\;\;,  \label{tt21.}
\end{equation}
\begin{equation}
y_{0}(0)=C_{1}\;,\;\;y_{1}(0)=C_{2} \label{tempu.}
\end{equation}
where 
\begin{equation}
F(y)=V_{0}(y)exp\left(\tfrac{1}{D}\int_{y}^{C_{2}}V_{0}(\xi)d\xi\right)=V_{0}(y)\left(1-\frac{1}{D}\int_{y}^{C_{2}}w(\xi,0)d\xi\right)\label{efemay}
\end{equation}
and the free boundaries $y_{0}=y_{0}(t)$ and $y_{1}=y_{1}(t)$ are given by:
\begin{equation}
y_{0}(t)=C_{1}-\beta^{2}\int^{t}_{0} \frac{1}{f(\tau)}d\tau-D\int^{t}_{0}\frac{w_{y}(y_{0}(\tau),\tau)}{w(y_{0}(\tau),\tau)}\left(1-\tfrac{\beta}{f(\tau)}\right)d\tau,\label{freee}
\end{equation}
\begin{equation}
y_{1}(t)=C_{2}+(1-\beta)t+\frac{D(\beta +1)}{\beta^{2}}log\left(1-\int^{t}_{0} w_{y}(y_{1}(\tau),\tau)d\tau\right).\label{free}
\end{equation}
\end{theorem}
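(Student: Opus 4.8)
The plan is to verify directly that the Hopf--Cole substitution \eqref{tres}--\eqref{eta} maps the Burgers problem \eqref{cal}--\eqref{fr} onto the linear heat problem \eqref{calu}--\eqref{free}, and then to exhibit the explicit inverse substitution for the converse. First I would record the two elementary consequences of \eqref{eta}: differentiating the integral in its lower limit gives $\eta_y=-\tfrac1D V\eta$, so that \eqref{tres} may be rewritten as $w=-DC(t)\eta_y$, and dividing yields the logarithmic derivative
\[
\frac{w_y}{w}=\frac{V_y}{V}-\frac{V}{D}.
\]
These two identities are the workhorses for everything that follows.

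To produce the heat equation \eqref{calu} I would first compute $\eta_t$ from \eqref{eta}: the boundary term carries the factor $V(y_1(t),t)=0$ by \eqref{t1} and drops out, and writing \eqref{cal} in conservation form $V_t=(DV_y-V^2)_y$ and integrating gives $\eta_t=D\eta_{yy}+V_y(y_1(t),t)\eta$. Thus $\eta$ solves the heat equation up to a time-dependent zeroth-order term, and the scalar factor $C(t)$ exists precisely to absorb it. Differentiating $w=-DC(t)\eta_y$ and using the relation for $\eta_t$, I expect
\[
w_t-Dw_{yy}=-D\eta_y\bigl(C'(t)+C(t)V_y(y_1(t),t)\bigr),
\]
so \eqref{calu} holds iff $C'(t)=-C(t)V_y(y_1(t),t)$. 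Evaluating $w_y=C(t)\eta\,(V_y-V^2/D)$ at $y=y_1(t)$ (where $V=0$, $\eta=1$) gives $w_y(y_1(t),t)=C(t)V_y(y_1(t),t)$; comparing with the derivative of \eqref{ce}, namely $C'(t)=-w_y(y_1(t),t)$, shows that \eqref{ce} is exactly the self-consistent choice forced by the PDE. This last point---that the implicit definition \eqref{ce} of $C$ is simultaneously compatible with and dictated by the equation, with $C$ appearing on both sides through $w_y(y_1,\cdot)$---is the step I expect to be the main obstacle.

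The remaining conditions are bookkeeping built on the two identities. Condition \eqref{t1.} is immediate from $V(y_1,t)=0$. For \eqref{cal1.} I would integrate $w=-DC(t)\eta_y$ over $[y_0(t),y_1(t)]$ and use $\eta(y_1,t)=1$ to obtain $C(t)+\tfrac1D\int_{y_0}^{y_1}w\,d\xi=C(t)\eta(y_0,t)$, whence \eqref{cal1.} after multiplying by $V(y_0,t)=f(t)-\beta$; the initial condition \eqref{tt21.}, \eqref{efemay} is the $t=0$ specialization of this same relation together with $w(y,0)=V_0(y)\eta(y,0)$, using $C(0)=1$ and $y_1(0)=C_2$. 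Condition \eqref{ste1.} follows by inserting $w_y(y_1,t)=C(t)V_y(y_1,t)$ into \eqref{ste1}. For \eqref{free} I would integrate $C'/C=-V_y(y_1,\cdot)$ to get $\log C(t)=-\int_0^tV_y(y_1,\tau)\,d\tau$ and substitute into \eqref{fr}. For \eqref{freee} I would use the logarithmic-derivative identity at $y=y_0$ (with $V(y_0,t)=f-\beta$), giving $D\tfrac{w_y}{w}\bigl(1-\tfrac{\beta}{f}\bigr)=\tfrac{DV_y(y_0,\cdot)}{f}-\tfrac{(f-\beta)^2}{f}$; the algebraic simplification $-\beta^2/f+(f-\beta)^2/f=f-2\beta$ then collapses \eqref{freee} back to \eqref{f}.

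Finally, for the reverse implication I would solve \eqref{tres}--\eqref{eta} for $V$: from $\eta_y=-w/(DC)$ and $\eta(y_1,t)=1$ one gets $\eta(y,t)=1+\tfrac1{DC(t)}\int_y^{y_1}w\,d\xi$, and hence the explicit inverse
\[
V(y,t)=\frac{Dw(y,t)}{DC(t)+\int_y^{y_1(t)}w(\xi,t)\,d\xi},
\]
which is well defined provided $C(t)>0$. Running the computations above in reverse then recovers \eqref{cal}--\eqref{fr}, establishing the claimed equivalence.
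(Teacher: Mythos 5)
Your proposal is correct, and it follows the same Hopf--Cole route as the paper, but it is substantially more complete than the paper's own proof, which consists essentially of deriving the inverse relation $V(y,t)=w(y,t)\big/\bigl(C(t)+\tfrac{1}{D}\int_{y}^{y_{1}(t)}w(\xi,t)\,d\xi\bigr)$ (Eq. (\ref{inverse})) and then asserting that the PDE, the boundary conditions and the free-boundary expressions transform as claimed. You instead carry out the forward verification in full: the computation $\eta_{t}=D\eta_{yy}+V_{y}(y_{1}(t),t)\,\eta$, together with the observation that (\ref{ce}) is exactly the choice forced by $C'(t)=-w_{y}(y_{1}(t),t)=-C(t)V_{y}(y_{1}(t),t)$, is the genuine mathematical content behind the paper's one-line claim that the Burgers equation maps to the heat equation, and your algebraic reduction of (\ref{freee}) back to (\ref{f}) via $-\beta^{2}/f+(f-\beta)^{2}/f=f-2\beta$ is likewise left unstated in the paper. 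What each approach buys: the paper's emphasis on the inverse formula makes the converse direction immediate, while your forward computation isolates precisely where each structural ingredient ($C(t)$, the condition $V(y_{1}(t),t)=0$, the value $\eta(y_{1}(t),t)=1$) is used. One concrete payoff of your bookkeeping: the relation $C(t)\eta(y,t)=C(t)+\tfrac{1}{D}\int_{y}^{y_{1}(t)}w(\xi,t)\,d\xi$ gives, at $t=0$, $w(y,0)=V_{0}(y)\bigl(1+\tfrac{1}{D}\int_{y}^{C_{2}}w(\xi,0)\,d\xi\bigr)$, so the minus sign in the second equality of (\ref{efemay}) is a typo in the statement (it is inconsistent with the paper's own formula (\ref{inverse}) and with $\eta\geq 1$); your derivation yields the corrected sign, and it would be worth saying so explicitly rather than citing (\ref{efemay}) as written.
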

\begin{proof} To prove the equivalence of the two problems we will deduce the inverse transformation to the relation$(\ref{tres})$ by considering the definition $(\ref{eta})$, we have
\[
log\left(\eta(y,t)\right)=\tfrac{1}{D}\int_{y}^{y_{1}(t)}V(\xi,t)d\xi
\] then
\[
\eta_{y}(y,t)=-\tfrac{1}{D}V(y,t)\eta(y,t)=-\tfrac{1}{D}\frac{w(y,t)}{C(t)}
.\]
Integrating on variable $y$, it follows that
\[
\eta(y,t)=\frac{C(t)+\frac{1}{D}\int_{y}^{y_{1}(t)}w(\xi,t)d\xi}{C(t)}.
\]
Therefore, we have that the inverse relation to the generalized Hopf-Cole transformation  $(\ref{tres})$ is expressed by: 
\begin{equation}
V(y,t)=\frac{w(y,t)}{C(t)+\frac{1}{D}\int_{y}^{y_{1}(t)}w(\xi,t)d\xi}.\label{inverse}
\end{equation}
Under transformation $(\ref{inverse})$ the Burgers equation $(\ref{cal})$ is mapped into the linear heat-diffusion equation $(\ref{calu}).$ The initial and boundary conditions $(\ref{cal1.})-(\ref{tempu.})$ are easily obtained from $(\ref{cal1})-(\ref{temp})$. The expressions $(\ref{freee})$ and $(\ref{free})$ for the free boundaries are obtained from $(\ref{f})$ and $(\ref{fr})$ respectively.

The converse is proved analogously.
\end{proof}

\section{Integral formulation}
In this section, we give an integral formulation of the free boundary problem $(\ref{calu})-(\ref{free})$. We have the following equivalence theorem. 
\begin{theorem}
 Let $(\ref{hip})$ and $0<D<2$ be. The solution to the free boundary problem $(\ref{calu})-(\ref{free})$ has the following integral representation 
\begin{equation}
w(y,t)=\int\nolimits_{C_{1}}^{C_{2}}G(y,t;\xi ,0)F(\xi
)d\xi +D
\int_{0}^{t}\phi_{1}(\tau )G(y,t;y_{1}(\tau ),\tau )d\tau  \label{z}
\end{equation}
\[
+\beta^{2}\int_{0}^{t} \frac{h(\tau)}{f(\tau)} G(y,t;y_{0}(\tau ),\tau )d\tau
-D\beta\int_{0}^{t} \frac{\phi_{2}(\tau)}{f(\tau)} G(y,t;y_{0}(\tau ),\tau ) d\tau 
\]
\[-D\int_{0}^{t} h(\tau) N_{y}(y,t;y_{0}(\tau ),\tau ) d\tau, 
\]
with
\begin{equation}
h(t)=(f(t)-\beta)\left(C(t)+\frac{1}{D}\int_{y_{0}(t)}^{y_{1}(t)}w(\xi,t)d\xi\right),\label{ache}
\end{equation}
\begin{equation}
y_{0}(t)=C_{1}-\beta^{2}\int^{t}_{0} \frac{1}{f(\tau)}d\tau-D\int^{t}_{0}\tfrac{\phi_{2}(\tau)}{h(\tau)}\left(1-\tfrac{\beta}{f(\tau)}\right)d\tau,\label{ycero}
\end{equation}
\begin{equation}
y_{1}(t)=C_{2}+(1-\beta)t+\tfrac{D(\beta +1)}{\beta^{2}}ln\left(1-\int^{t}_{0} \phi_{1}(\tau)d\tau\right) \label{ese}
\end{equation}
and $\phi_{1}$, $\phi_{2}$ are defined by 
\begin{equation}
\phi_{1}\left(t\right) =\frac{\partial w
}{\partial y}\left( y_{1}(t),t\right) \;\;,\;\;\phi_{2}\left( t\right) =
\frac{\partial w }{\partial y}\left(
y_{0}(t),t\right) \label{def}
\end{equation}
if and only if it satisfies the following system of two Volterra integral
equations: 
\[
\phi_{1}\left(t\right)
 =\frac{2}{2-D }\left\{
\int\nolimits_{C_{1}}^{C_{2}}N(y_{1}(t),t;\xi ,0)F^{\prime }(\xi )d\xi+
D
\int_{0}^{t}\phi_{1}(\tau )G_{y}(y_{1}(t),t;y_{1}(\tau ),\tau )d\tau \right. 
\]
\[
+\beta^{2}\int_{0}^{t} \frac{h(\tau)}{f(\tau)} G_{y}(y_{1}(\tau),t;y_{0}(\tau ),\tau )d\tau
-D\beta\int_{0}^{t} \frac{\phi_{2}(\tau)}{f(\tau)} G_{y}(y_{1}(\tau),t;y_{0}(\tau ),\tau ) d\tau ,
\]
\begin{equation}\left.-\int_{0}^{t} h'(\tau) N(y_{1}(t),t;y_{0}(\tau ),\tau ) d\tau 
\right\rbrace,
\label{ecintegralf}
\end{equation}
\[
\phi_{2}\left(t\right) =\frac{2f(t)}{2f(t)-D\beta}\left\{-\beta^{2}\frac{h(t)}{f(t)}
+\int\nolimits_{C_{1}}^{C_{2}}N(y_{0}(t),t;\xi ,0)F^{\prime }(\xi )d\xi
\right.  
\]
\[
 + D
\int_{0}^{t}G_{y} (y_{0}(t),t;y_{1}(\tau ),\tau )\phi_{1}(\tau )d\tau
+\beta^{2}\int_{0}^{t}\frac{h(\tau)}{f(\tau)}G_{y}(y_{0}(t),t;y_{0}(\tau ),\tau )(\tau )d\tau 
\]
\begin{equation}
\left. - D\beta
\int_{0}^{t}\frac{\phi_{2}(\tau)}{f(\tau)}G_{y}(y_{0}(t),t;y_{0}(\tau ),\tau )(\tau )d\tau 
-\int_{0}^{t}h'(\tau)N(y_{0}(t),t;y_{0}(\tau ),\tau )(\tau )d\tau \right\}, \label{ecintegralf1}
\end{equation}
where $G$, $N$ are the Green and Neumann functions respectively, and $K\;$is the
fundamental solution to the heat equation, defined by 
\begin{equation}
G\left( x,t,\xi ,\tau \right) =K\left( x,t,\xi ,\tau \right) -K\left(
-x,t,\xi ,\tau \right),  \label{defG}
\end{equation}
\begin{equation}
N\left( x,t,\xi ,\tau \right) =K\left( x,t,\xi ,\tau \right) +K\left(
-x,t,\xi ,\tau \right),  \label{defN}
\end{equation}
\begin{equation}
K\left( x,t,\xi ,\tau \right) =\left\{ 
\begin{array}{ll}
\frac{1}{2\sqrt{\pi D\left( t-\tau \right) }}\exp \left( -\frac{\left( x-\xi
\right) ^{2}}{4D\left( t-\tau \right) }\right) & t>\tau \\ 
0 & t\leq \tau
\end{array}
\right.  \label{defK}
\end{equation}
and $y_{0}\;$, $y_{1}$ are given by $\left( \ref{ycero}\right) $ and $%
\left( \ref{ese}\right) $ respectively. Moreover, function $h(t)=w(y_{0}(t),t)$ must satisfy the integral relation  
\begin{equation}
h(t)= (f(t)-\beta)\left(1-\int_{0}^{t}\phi_{1}(\tau)d\tau+\frac{1}{D}\int_{y_{0}(t)}^{y_{1}(t)}w(y,t)dy\right).\label{eqache}
\end{equation}
\end{theorem}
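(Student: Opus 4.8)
The plan is to treat the linear problem $(\ref{calu})$--$(\ref{free})$ by the classical method of heat potentials, representing $w$ through the Green and Neumann functions $(\ref{defG})$--$(\ref{defN})$ and then converting the boundary conditions into Volterra equations for the fluxes $\phi_1,\phi_2$. The first task is to derive the representation $(\ref{z})$ from Green's second identity for the operator $w_t-Dw_{yy}$ and its adjoint, integrated over the non-cylindrical region $\{(\xi,\tau):y_0(\tau)<\xi<y_1(\tau),\,0<\tau<t\}$. Since $w$ solves $(\ref{calu})$ and $G$ solves the backward heat equation in $(\xi,\tau)$, only boundary contributions survive: the line $\tau=0$ yields $\int_{C_1}^{C_2}G(y,t;\xi,0)F(\xi)\,d\xi$, while each lateral curve contributes a flux term $D\,Gw_\xi$, a double-layer term $-D\,G_\xi w$, and a convective term $Gw\,\dot y_i$ coming from the moving boundary. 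On $\xi=y_1(\tau)$ the Dirichlet condition $(\ref{t1.})$ makes both $w$-terms vanish, leaving the single layer $D\int_0^t\phi_1(\tau)G(y,t;y_1(\tau),\tau)\,d\tau$. On $\xi=y_0(\tau)$ I would insert $w=h$, $w_\xi=\phi_2$ and eliminate $\dot y_0$ through $(\ref{freee})$; the plain flux $D\int_0^tG\phi_2\,d\tau$ then cancels against part of the convective term, and using the kernel identity $G_\xi(y,t;\xi,\tau)=-N_y(y,t;\xi,\tau)$ the remaining double layer becomes $-D\int_0^t h\,N_y\,d\tau$. Collecting the surviving terms reproduces $(\ref{z})$.

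Second, to obtain $(\ref{ecintegralf})$--$(\ref{ecintegralf1})$ I would differentiate $(\ref{z})$ in $y$ and let $y\to y_1(t)^-$ and $y\to y_0(t)^+$. The initial-data integral is first rewritten using $G_y=-N_\xi$ and integrated by parts in $\xi$, the endpoint $\xi=C_2$ dropping out because $F(C_2)=V_0(C_2)=0$ (recall $u_0(b)=\beta$), which produces the term $\int_{C_1}^{C_2}N\,F'\,d\xi$. The single-layer potentials carried by $\phi_1$ and by the $y_0$-densities are continuous, but the standard jump relation for the spatial derivative of a single layer contributes, in the limit, a multiple of the density evaluated at the current time; the hypersingular double-layer term $-D\int_0^t h\,N_y\,d\tau$ is handled by writing $N_{yy}=-\tfrac1D\,\partial_\tau N$ and integrating by parts in $\tau$, which transfers the singularity and produces the density $h'(\tau)$ and the kernel $N$ appearing in $(\ref{ecintegralf})$--$(\ref{ecintegralf1})$, together with the free term $-\beta^2 h(t)/f(t)$ in $(\ref{ecintegralf1})$. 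Transferring the self-interaction terms proportional to $\phi_1(t)$, respectively $\phi_2(t)$, to the left-hand side leaves the coefficients $\tfrac{2-D}{2}$ and $\tfrac{2f(t)-D\beta}{2f(t)}$; their reciprocals are precisely the prefactors $\tfrac{2}{2-D}$ and $\tfrac{2f(t)}{2f(t)-D\beta}$. Here the hypothesis $0<D<2$ guarantees $2-D>0$, while $f>\tfrac{3\beta}{2}$ gives $2f(t)-D\beta>0$, so both prefactors are well defined and positive. Finally, relation $(\ref{eqache})$ is immediate: by $(\ref{ce})$ and $(\ref{def})$ one has $C(t)=1-\int_0^t\phi_1(\tau)\,d\tau$, and substituting this into $(\ref{ache})$ gives exactly $(\ref{eqache})$; the free-boundary expressions $(\ref{ycero})$--$(\ref{ese})$ are $(\ref{freee})$--$(\ref{free})$ after the substitutions $\phi_2=w_y(y_0,\cdot)$, $\phi_1=w_y(y_1,\cdot)$.

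For the converse I would assume $\phi_1,\phi_2$ solve $(\ref{ecintegralf})$--$(\ref{ecintegralf1})$ together with $(\ref{eqache})$, define $y_0,y_1$ by $(\ref{ycero})$--$(\ref{ese})$ and $w$ by $(\ref{z})$, and verify that $w$ solves $(\ref{calu})$--$(\ref{free})$: the heat equation holds automatically because $G$ and $N$ are built from the fundamental solution $K$; retracing the jump computation shows that the two integral equations are equivalent to $w_y(y_1,t)=\phi_1(t)$ and $w_y(y_0,t)=\phi_2(t)$, i.e.\ to $(\ref{def})$, after which $(\ref{ste1.})$ and $(\ref{cal1.})$ follow from $(\ref{ese})$, $(\ref{ycero})$ and $(\ref{eqache})$, while $(\ref{t1.})$ and $(\ref{tempu.})$ hold by construction.

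I expect the principal difficulty to be the rigorous justification of these limiting and jump relations at the \emph{moving} boundaries $y_0(t),y_1(t)$: proving the continuity of the single-layer potentials up to the carrying curve, computing the exact discontinuity of their spatial derivatives, and validating the time-integration by parts for the double-layer term all require sufficient regularity (Hölder continuity in time) of $y_0,y_1,h$ so that the kernels $G_y,N,N_y$ remain integrable up to $\tau=t$. This is exactly the point where the coefficients $\tfrac{2}{2-D}$ and $\tfrac{2f(t)}{2f(t)-D\beta}$ are pinned down, and where the interplay of the constant $D$ with the normalization of the heat kernel must be tracked with care.
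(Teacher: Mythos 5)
Your treatment of the direct implication --- deriving (\ref{z}) by integrating the Green identity (\ref{ngreen}) over the non-cylindrical domain, obtaining (\ref{ecintegralf})--(\ref{ecintegralf1}) by differentiating the representation in $y$ and applying the jump relations as $y\to y_1^-(t)$ and $y\to y_0^+(t)$, and reading off (\ref{eqache}) directly from (\ref{ce}) and (\ref{ache}) --- is exactly the paper's argument, and your bookkeeping of the prefactors $\tfrac{2}{2-D}$ and $\tfrac{2f(t)}{2f(t)-D\beta}$, together with the role of $0<D<2$ and $f>\tfrac{3\beta}{2}$, is sound.

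The genuine gap is in your converse. You assert that, once the integral equations are seen to be equivalent to $w_y(y_1(t),t)=\phi_1(t)$ and $w_y(y_0(t),t)=\phi_2(t)$, condition (\ref{cal1.}) ``follows from (\ref{ese}), (\ref{ycero}) and (\ref{eqache})'' and condition (\ref{t1.}) ``holds by construction''. Neither claim is true. When $w$ is \emph{defined} by (\ref{z}), its traces on the two curves are not prescribed by the construction: the representation (\ref{z}) was derived under the assumptions $w(y_1(\tau),\tau)=0$ and $w(y_0(\tau),\tau)=h(\tau)$, but once (\ref{z}) is turned around and used as a definition, these become statements that must be proved, and they are precisely the Dirichlet conditions (\ref{t1.}) and (\ref{cal1.}) (the latter, using (\ref{eqache}) and $C(t)=1-\int_0^t\phi_1(\tau)\,d\tau$, reduces to $w(y_0(t),t)=h(t)$). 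Verifying them is the substantive content of the paper's converse: one sets $\mu_1(t)=w(y_1(t),t)$ and $\mu_2(t)=h(t)-w(y_0(t),t)$, produces a second representation (\ref{zbis}) of $w$ by applying the Green identity to $w$ itself (legitimate, since the $w$ defined by (\ref{z}) does solve the heat equation), compares it with (\ref{z}) to obtain the identity (\ref{des}), and then lets $y\to y_1^-(t)$ and $y\to y_0^+(t)$ to arrive at the homogeneous Volterra system (\ref{fiuno})--(\ref{fidos}) for $(\mu_1,\mu_2)$, whose only solution is $\mu_1\equiv\mu_2\equiv 0$ by standard Volterra uniqueness theory \cite{Mi}. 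Without this step (or an equivalent substitute), your converse establishes (\ref{calu}), (\ref{ste1.}), (\ref{tt21.}) and (\ref{tempu.}) but not the two boundary conditions, so the ``if'' direction of the equivalence remains unproven.
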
 
\begin{proof}
Let $w (y,t)$, $y_0 (t)$, $y_1(t)$ be the solution to the problem $(\ref{calu})-(\ref{free})$. We integrate on the domain 
\[
D_{t,\epsilon}=\left\{ \left( \xi ,\tau \right) \text{ }/\text{ }%
y_{0}(\tau )<\xi <y_{1}\left( \tau \right) ,\text{ }\epsilon <\tau
<t-\epsilon \right\} \;(\epsilon >0), 
\]
$\;$ the Green identity 
\begin{equation}
D\left( Gw _{\xi}-wG_{\xi }\right) _{\xi }-\left( Gw
\right) _{\tau }=0\;\;  \label{ngreen}
\end{equation}
and we let $\epsilon \rightarrow 0$, to obtain the integral
representation for $w (y,t)\;$\cite{Fr1959,Ru} 
\begin{equation}
w(y,t)=\int\nolimits_{C_{1}}^{C_{2}}G(y,t;\xi ,0)w(\xi,0)
d\xi +D
\int_{0}^{t}w_{\xi}(y_{1}(\tau),\tau)G(y,t;y_{1}(\tau ),\tau )d\tau  \label{now}
\end{equation}
\[
+\beta^{2}\int_{0}^{t} \frac{w(y_{0}(\tau),\tau)}{f(\tau)} G(y,t;y_{0}(\tau ),\tau )d\tau
-D\beta\int_{0}^{t} \frac{w_{\xi}(y_{0}(\tau),\tau)}{f(\tau)} G(y,t;y_{0}(\tau ),\tau ) d\tau 
\]
\[+D\int_{0}^{t} w(y_{0}(\tau),\tau) G_{\xi}(y,t;y_{0}(\tau ),\tau ) d\tau .
\]
By using the definitions of $\phi_{1}$ and $\phi_{2}$ given by $(\ref{def})$, the definition of $h$ and boundary conditions we have $(\ref{z}).$
If we differentiate (\ref{now}) in variable $y$ and we let $%
y\rightarrow y_{0}^{+}(t)$ and $y\rightarrow y_{1}^{-}(t),$ by using
the jump relations \cite{Fr1959} we obtain the system of integral equations $%
\left( \ref{ecintegralf}\right) $ and $\left( \ref{ecintegralf1}\right) $
for $\phi_{1}$ and $\phi_{2}.$ Moreover, from (\ref{ce}) and (\ref{ache}) we have the equation (\ref{eqache}).

Conversely, the function $w(y,t)$ defined by (\ref{z}), where $%
\phi_{1} $ and $\phi_{2}$ are the solutions of $\left( \ref{ecintegralf}\right) $ and $%
\left( \ref{ecintegralf1}\right) ,$ satisfies the conditions $(\ref{calu})$, $(\ref{ste1.})$ - $(\ref{tempu.})$. In order to
prove the conditions $(\ref{cal1.})$ and $(\ref{t1.})$ we define 
\[
\mu _{1}\left( t\right) =w(y_{1}(t),t)\;\text{%
and}\;\mu _{2}\left( t\right) =h(t)-w(y_{0}(t),t).\; 
\]
\\
If we integrate the Green identity\ (\ref{ngreen})$\;$over the domain $%
D_{t,\varepsilon }$ $\left( \varepsilon >0\right) $ and we let $\varepsilon
\rightarrow 0,$ we obtain that 
\[
w(y,t)=\int\nolimits_{C_{1}}^{C_{2}}G(y,t;\xi
,0)w(\xi,0)d\xi +D\int_{0}^{t} G(y,t;y_{1}(\tau ),\tau )\phi_{1}(\tau)d\tau 
\]
\[-D\int_{0}^{t} G_{y}(y,t;y_{1}(\tau ),\tau )w
(y_{1}(\tau ),\tau )d\tau +\int_{0}^{t} G(y,t;y_{1}(\tau ),\tau )w(y_{1}(\tau),\tau)y^{'}_{1}(\tau)d\tau 
\]
\[
-\int_{0}^{t}G(y,t;y_{0}(\tau ),\tau )\left[w(y_{0}(\tau),\tau)y^{'}_{0}(\tau) -D\phi_{2}(\tau)\right]d\tau
\]
\begin{equation}
+D\int_{0}^{t} G_{\xi}(y,t;y_{0}(\tau ),\tau )w(y_{0}(\tau),\tau) d\tau. \label{zbis}
\end{equation}
\\
Then, if we compare this last expression (\ref{zbis}) with (\ref{z}) we
deduce that 
\[
\int_{0}^{t}G(y,t;y_{0}(\tau ),\tau )\left[\frac{\beta^{2}}{f(\tau)}\mu_{2}(\tau)-\frac{D\phi_{2}(\tau)}{f(\tau)}\left(\beta+\frac{w(y_{0}(\tau),\tau)(f(\tau)-\beta)}{h(\tau)}\right)+D\phi_{2}(\tau)\right]d\tau
\]
\[
+D\int_{0}^{t} G_{y}(y,t;y_{0}(\tau ),\tau )\mu_{2}(\tau) d\tau+D\int_{0}^{t} G_{y}(y,t;y_{1}(\tau ),\tau )\mu_{1}(\tau)d\tau 
\]
\begin{equation}
-\int_{0}^{t} G(y,t;y_{1}(\tau ),\tau )\mu_{1}(\tau)\left[(1-\beta)-\frac{D\phi_{1}(\tau)(\beta+1}{\beta^{2}C(\tau)}\right] d\tau =0.
\label{des}
\end{equation}
\\
By taking $y\rightarrow y_{1}^{-}(t)$ and $y\rightarrow y_{0}^{+}(t)$
in (\ref{des}), and the jump relations we obtain that $\mu _{1}$
and $\mu _{2}\;$ must satisfy the following system of Volterra integral
equations: 
\begin{equation}
\mu _{1}(t)=\frac{-2}{D}\int_{0}^{t} \left\lbrace DG_{y}(y_{1}(t),t;y_{1}(\tau ),\tau )-G(y,t;y_{1}(\tau ),\tau )\left[(1-\beta)-\tfrac{D\phi_{1}(\tau)(\beta+1}{\beta^{2}C(\tau)}\right]\right\rbrace\mu_{1}(\tau),  \label{fiuno}
\end{equation}
\[
+ \left\lbrace DG_{y }(y_{1}(t),t;y_{0}(\tau ),\tau
)+G(y_{1}(t),t;y_{0}(\tau ),\tau )\left[\tfrac
{\beta}{f(\tau)}-D\phi_{2}(\tau)\left(\tfrac
{\beta}{f(\tau}-\tfrac{1}{h(\tau)}\right)\right]\right\rbrace\mu_{2}(\tau)d\tau
\]
\\
\begin{equation}
\mu _{2}(t)=\frac{2}{D}+\int_{0}^{t} \left\lbrace DG_{y}(y_{0}(t),t;y_{0}(\tau ),\tau )-G(y_{0}(t),t;y_{0}(\tau ),\tau )\right.\label{fidos}
\end{equation}
\[
\left.\left[\tfrac
{\beta}{f(\tau)}-D\phi_{2}(\tau)\left(\tfrac
{\beta}{f(\tau}-\tfrac{1}{h(\tau)}\right)\right]\right\rbrace \mu_{2}(\tau)  
\]
\[
+\left\lbrace D G_{y}(y_{0}(t),t;y_{1}(\tau ),\tau )- G(y_{0}(t),t;y_{1}(\tau ),\tau )\left[(1-\beta)-\frac{D\phi_{1}(\tau)(\beta+1}{\beta^{2}C(\tau)}\right]\right\rbrace\mu_{1}(\tau) d\tau .
\]
\\
Following \cite{Mi}, it's easy to see that there exist a unique solution $%
\mu _{1}\equiv \mu _{2}\equiv 0$ to the system of Volterra integral
equations (\ref{fiuno})-(\ref{fidos}). Then $(\ref{cal1.})$ and $(\ref{t1.})$
are verified and the result holds.\medskip
\end{proof}

\begin{section}{Existence of the solution}

In order to prove existence of solution $w=w(y,t)$, $y=y_{0}(t)$ and $y=y_{1}(t)$ of $(\ref{calu})-(\ref{free})$ and taking into account the result of Theorem 3.1 we will demonstrate that there exists at least a local solution $\phi_{1}$,$\phi_{2}$ and $h$ to the coupled nonlinear integral equations $\left( \ref{ecintegralf}\right) $, $%
\left( \ref{ecintegralf1}\right)$ and $(\ref{eqache})$.
\\
We will proceed in the following way:  
Fixed positive constants $H$,$R$, $S$ and $\sigma$ we define the set $\Pi=\Pi(H,R,S,\sigma)$ given by 
\begin{equation}
\Pi :=\left\lbrace h\in C^{1}[0,\sigma]/h(t)\geq H , \left\|h\right\|\leq R,\left\|h'\right\|\leq S\right\rbrace
\end{equation} where  $\left\|h\right\|=\max \limits_{
t\in[0,\sigma]}\left|h(t)\right|$. Clearly $\Pi$ is a compact and convex set in $C^{1}[0,\sigma]$. 
\\
 For each fixed function $h\in\Pi_1=\left\lbrace h\in C^{1}[0,1]/h(t)\geq H , \left\|h\right\|\leq R,\left\|h'\right\|\leq S\right\rbrace$ we will use the Banach fixed point Theorem in order to prove that  there exist unique solutions $\phi_{1},\;\phi_{2}\;\in C^{0}\left[ 0,\sigma
\right] \;$ to the system of two Volterra integral equations (\ref
{ecintegralf})\ and (\ref{ecintegralf1}). Then for suitable $H,R,S$ and $\sigma$, by using Shauder's fixed point Theorem we will demonstrate that there exists at least a solution $h \in \Pi_{1}$ of (\ref{eqache}).
\begin{subsection}{Existence and uniqueness of $\phi_{1},\;\phi_{2}$}
We consider the Banach space \[
\textbf{C}[0,\sigma]=\left\{ \stackrel{\longrightarrow }{\phi^{*}}=\binom{\phi_{1}}{\phi_{2}}%
/\;\phi_{i}:\left[ 0,\sigma \right] \rightarrow {\Bbb R} ,\quad i=1,2,\;\text{continuous}
\right\} 
\] 
with the norm
\[
\left\| \stackrel{\longrightarrow }{\phi^{*}}\right\| _{\sigma }:=\max \limits_{
t\in \left[ 0,\sigma \right] }\left| \phi_{1}(t)\right| +\max\limits_{
t\in \left[ 0,\sigma \right] }\left| \phi_{2}(t)\right| 
\] and the subset:
\[
C_{M,\sigma}=\left\{ \stackrel{\longrightarrow }{\phi^{*}}\in \textbf{C}[0,\sigma] /\left\| \stackrel{\longrightarrow }{\phi^{*}}\right\|_{\sigma }\leq
M\right\} 
\]
with $\sigma \,\,$ and $M$ positive numbers to be determinate. 
\\
We define the map $\chi:C_{M,\sigma }\longrightarrow C_{M,\sigma },$ such that 
\[
\chi\left( \stackrel{%
\longrightarrow }{\phi^{*}}\right)(t) =\binom{\chi_{1}(\phi_{1}(t),\phi_{2}(t))}{\chi_{2}(\phi_{1}(t),\phi_{2}(t)))%
} 
\]
where 
\[
\chi_{1}(\phi_{1}(t),\phi_{2}(t))=\tfrac{2}{2-D }\left\{
\int\nolimits_{C_{1}}^{C_{2}}N(y_{1}(t),t;\xi ,0)F^{\prime }(\xi )d\xi+
D
\int_{0}^{t}\phi_{1}(\tau )G_{y}(y_{1}(t),t;y_{1}(\tau ),\tau )d\tau \right. 
\]
\[
+\beta^{2}\int_{0}^{t} \frac{h(\tau)}{f(\tau)} G_{y}(y_{1}(\tau),t;y_{0}(\tau ),\tau )d\tau
-D\beta\int_{0}^{t} \frac{\phi_{2}(\tau)}{f(\tau)} G_{y}(y_{1}(\tau),t;y_{0}(\tau ),\tau ) d\tau ,
\]
\begin{equation}\left.-\int_{0}^{t} h'(\tau) N(y_{1}(t),t;y_{0}(\tau ),\tau ) d\tau 
\right\rbrace,
\label{F1}
\end{equation}

\[
\chi_{2}(\phi_{1}(t),\phi_{2}(t))=
\frac{2f(t)}{2f(t)-D\beta}\left\{-\beta^{2}\frac{h(t)}{f(t)}
+\int\nolimits_{C_{1}}^{C_{2}}N(y_{0}(t),t;\xi ,0)F^{\prime }(\xi )d\xi
\right.  
\]
\[
 + D
\int_{0}^{t}G_{y} (y_{0}(t),t;y_{1}(\tau ),\tau )\phi_{1}(\tau )d\tau
+\beta^{2}\int_{0}^{t}\frac{h(\tau)}{f(\tau)}G_{y}(y_{0}(t),t;y_{0}(\tau ),\tau )(\tau )d\tau 
\]
\begin{equation}
\left. - D\beta
\int_{0}^{t}\frac{\phi_{2}(\tau)}{f(\tau)}G_{y}(y_{0}(t),t;y_{0}(\tau ),\tau )(\tau )d\tau 
-\int_{0}^{t}h'(\tau)N(y_{0}(t),t;y_{0}(\tau ),\tau )(\tau )d\tau \right\}. 
\label{F2}
\end{equation}
We will prove that for suitable $M$ and $\sigma$, the map $\chi$ is well defined and it is also a contraction, therefore by the Banach fixed point Theorem it has a unique fixed point. 

Firstly, we give some preliminary results
\begin{lemma}\label{cotasy}
Let $f(t)>\frac{3\beta}{2}$, $0<D<2$ and $\phi_{i}\in C^{0}\left[ 0,\sigma \right] ,\max \limits_ {t\in \left[ 0,\sigma
\right]}\left| \phi_{i}(t)\right| \leq M,(i=1,2)$. If $$2(1+\beta)\left(1+\frac{M}{\beta^{2}}\right)\sigma \leq C_{2},\quad\quad 2\left(\beta+2\frac{MD}{H}\right)\sigma\leq C_{1}\;$$ then $y_{0}$ and $y_{1}$ defined by (\ref{ycero})$\;$%
and (\ref{ese}) satisfies 
\begin{equation}
\left| y_{0}(t)-y_{0}(\tau) \right| \leq \left(\beta+2\tfrac{DM}{H}\right)\left| t-\tau
\right| \text{ \ ,\ }\forall \tau ,t\in \left[ 0,\sigma \right], \label{io}
\end{equation}
\begin{equation}
\tfrac{C_{1}}{2}\leq y_{0}(t) \leq 3\tfrac{C_{1}}{2},\forall
t\in \left[ 0,\sigma \right], \label{io1}
\end{equation}
\begin{equation}
\left| y_{1}(t)-y_{1}(\tau) \right| \leq (1+\beta)\left(1+\tfrac{M}{\beta^{2}}\right)\left|t-\tau
\right| \text{ \ ,\ }\forall \tau ,t\in \left[ 0,\sigma \right],\label{io2}
\end{equation}
\begin{equation}
\tfrac{C_{2}}{2}\leq y_{1}(t)\leq 3\tfrac{C_{2}}{2},\text{ }%
\forall t\in \left[ 0,\sigma \right].\label{io3}
\end{equation}
\end{lemma}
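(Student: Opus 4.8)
The plan is to establish the two Lipschitz estimates \eqref{io} and \eqref{io2} first, by estimating directly the integrands in the defining formulas \eqref{ycero} and \eqref{ese}, and then to obtain the two-sided bounds \eqref{io1} and \eqref{io3} as immediate corollaries. Indeed, once \eqref{io} and \eqref{io2} are known with constants $L_0=\beta+\tfrac{2DM}{H}$ and $L_1=(1+\beta)\bigl(1+\tfrac{M}{\beta^2}\bigr)$, taking $\tau=0$ and using $y_0(0)=C_1$, $y_1(0)=C_2$ gives $|y_0(t)-C_1|\le L_0 t\le L_0\sigma$ and $|y_1(t)-C_2|\le L_1 t\le L_1\sigma$. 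The two smallness hypotheses are precisely $2L_0\sigma\le C_1$ and $2L_1\sigma\le C_2$, so $|y_0(t)-C_1|\le C_1/2$ and $|y_1(t)-C_2|\le C_2/2$ on $[0,\sigma]$; these sandwich inequalities are \eqref{io1} and \eqref{io3}, provided one uses $C_1>0$ (whence $C_2=C_1+\int_0^b u_0^{-1}>0$), which I take to be part of the normalisation of the constant in Section~2.

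For \eqref{io}, I would write $y_0(t)-y_0(\tau)$ from \eqref{ycero} as the integral over $[\tau,t]$ of the integrand $-\tfrac{\beta^2}{f}-D\tfrac{\phi_2}{h}\bigl(1-\tfrac{\beta}{f}\bigr)$ and bound it pointwise. The hypothesis $f(t)>\tfrac{3\beta}{2}$ gives $\tfrac{\beta^2}{f}<\tfrac{2\beta}{3}<\beta$ and $0<1-\tfrac{\beta}{f}<1$, while $|\phi_2|\le M$ and $h\ge H$ give $\bigl|\tfrac{\phi_2}{h}\bigr|\le\tfrac{M}{H}$. Hence the integrand is dominated in absolute value by $\beta+\tfrac{DM}{H}\le\beta+\tfrac{2DM}{H}=L_0$, and integrating over $[\tau,t]$ yields \eqref{io}. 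This part is entirely routine, and the stated constant even carries some slack.

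The main obstacle is \eqref{io2}, because of the logarithm in \eqref{ese}. Setting $\Phi(t)=\int_0^t\phi_1$ and $C(t)=1-\Phi(t)$ (the function of \eqref{ce}), one computes $\dot y_1(t)=(1-\beta)-\tfrac{D(\beta+1)}{\beta^2}\tfrac{\phi_1(t)}{C(t)}$, so the whole estimate reduces to controlling $1/C(t)$ from above, i.e. bounding $C(t)$ away from $0$. Since $C(0)=1$ and $|\dot C|=|\phi_1|\le M$, for $\sigma$ small $C(t)$ remains in a fixed neighbourhood of $1$ and admits a uniform positive lower bound on $[0,\sigma]$; this is exactly where the smallness of $\sigma$ (together with $0<D<2$) must be invoked, and it is the only genuinely delicate point, since making the constant come out as $L_1$ requires $C(t)$ to stay close to $1$. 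Granting such control, together with $|1-\beta|\le 1+\beta$ and $|\phi_1|\le M$, one bounds $\dot y_1$ by $L_1$ and obtains \eqref{io2} from the mean value theorem; equivalently, one estimates the difference $\ln C(t)-\ln C(\tau)=\int_\tau^t(-\phi_1/C)$ directly.

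Finally, \eqref{io3} follows from \eqref{io2} with $\tau=0$ and $2L_1\sigma\le C_2$, exactly as \eqref{io1} followed from \eqref{io}. The overall scheme is therefore a two-step one — a pointwise bound on the integrand (for $y_0$) or on the derivative (for $y_1$), followed by a smallness-of-$\sigma$ sandwich argument — with the uniform lower bound on $C(t)$ in the $y_1$ estimate being the step that requires care.
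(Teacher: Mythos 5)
The paper's own proof of this lemma is the single sentence ``It follows inmediatly from definitions (\ref{ycero})-(\ref{ese}) and assumptions on data'', so your two-step scheme --- pointwise bounds on the integrands to get the Lipschitz estimates, then the sandwich bounds \eqref{io1}, \eqref{io3} by taking $\tau=0$ and invoking the two smallness hypotheses (together with $C_1,C_2>0$, which is indeed how the paper uses these constants, cf.\ the later requirement $0<C_1<\tfrac{U_0}{2}$) --- is exactly the argument the authors intend. Your treatment of $y_0$ is complete and correct: $f>\tfrac{3\beta}{2}$ gives $\tfrac{\beta^2}{f}<\beta$ and $0<1-\tfrac{\beta}{f}<1$, while $|\phi_2|\le M$ and $h\ge H$ bound the second integrand by $\tfrac{DM}{H}$, yielding \eqref{io} and hence \eqref{io1}.

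The point you flag as delicate in the $y_1$ estimate is, however, a genuine gap, and your sketch does not close it. Writing $\dot y_1(t)=(1-\beta)-\tfrac{D(\beta+1)}{\beta^2}\tfrac{\phi_1(t)}{C(t)}$ with $C(t)=1-\int_0^t\phi_1(\tau)d\tau$, the claimed bound $|\dot y_1|\le(1+\beta)\bigl(1+\tfrac{M}{\beta^2}\bigr)$ requires $\tfrac{D}{C(t)}\le 1$, i.e.\ $C(t)\ge D$. The control you invoke (``$C(t)$ stays close to $1$ for $\sigma$ small'') gives only $C(t)\ge 1-M\sigma$, so you would need $M\sigma\le 1-D$; this is not among the lemma's hypotheses, and it is unattainable in the range $1<D<2$, which the lemma explicitly allows, since $C(0)=1<D$. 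Concretely, for $\phi_1\equiv M$ one has $|\dot y_1(0)|=\bigl|(1-\beta)-\tfrac{D(1+\beta)M}{\beta^2}\bigr|$, which exceeds $(1+\beta)\bigl(1+\tfrac{M}{\beta^2}\bigr)$ as soon as $(D-1)(1+\beta)M/\beta^2>2$, so \eqref{io2} fails with the stated constant. What your argument actually proves is the Lipschitz constant $(1+\beta)+\tfrac{D(1+\beta)M}{\beta^2(1-M\sigma)}$; either this larger constant should replace the one in \eqref{io2} (and correspondingly in the hypothesis $2(1+\beta)\bigl(1+\tfrac{M}{\beta^2}\bigr)\sigma\le C_2$ and in \eqref{io3}), or an extra assumption of the type $M\sigma\le 1-D$ (forcing $D<1$) must be added. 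To be fair, this defect is inherited from the paper itself, whose ``immediate'' proof glosses over precisely this division by $C(t)$; but as written your proof of \eqref{io2}, and therefore of \eqref{io3}, is incomplete.
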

\begin{proof} It follows inmediatly from definitions (\ref{ycero})-(\ref{ese}) and assumptions on data.
\end{proof}
To prove the following Lemmas we need to use the classical inequality 
\begin{equation}
\dfrac{\exp \left( \frac{-x^{2}}{\alpha \left( t-\tau \right) }\right) }{%
\left( t-\tau \right) ^{\frac{n}{2}}}\leq \left( \frac{n\alpha }{2ex^{2}}%
\right) ^{^{\frac{n}{2}}}\;,\;\alpha ,x>0\;,\;t>\tau \;,\;n\in {\Bbb N.}
\label{exp}
\end{equation}

\begin{lemma}\label{cotasint}
$\;$Let $\sigma \leq 1.$ For each function $h\in \Pi_{1}$ under the hypothesis of Lemma $\ref{cotasy}$ and $C_{1}<\frac{U_{0}}{2}$ we have that following properties are satisfied
\begin{equation}
\int\nolimits_{C_{1}}^{C_{2}}\left| F^{\prime }(\xi )\right| \left|
N(y_{1}(t),t;\xi ,0)\right| d\xi \leq \left\| F^{\prime
}\right\| \leq A_{1}(u_{0},U_{0},\beta, D), \label{i}
\end{equation}
\begin{equation}
D\int_{0}^{t}\left| G_{y}(y_{1}(t),t;y_{1}(\tau ),\tau )\phi_{1}(\tau
)\right| d\tau \leq A_{2}(M,D,U_{0},C_{1})\;\sqrt{\sigma},  \label{olvidada}
\end{equation}
\begin{equation}
\beta^{2}\int_{0}^{t}\left| G_{y}(y_{1}(t),t;y_{0}(\tau ),\tau )\frac{h(\tau
)}{f(\tau)}\right| d\tau \leq A_{3}(R,D,\beta,C_{2},C_{1})\;\sqrt{\sigma}  ,\label{olv}
\end{equation}
\begin{equation}
\beta D\int_{0}^{t}\left| G_{y}(y_{1}(t),t;y_{0}(\tau ),\tau )\frac{\phi_{2}(\tau
)}{f(\tau)}\right| d\tau \leq A_{4}(M,D,C_{2},C_{1})\;\sqrt{\sigma},\label{olv1}
\end{equation}
\medskip 
\begin{equation}
\int_{0}^{t}\left| h'(\tau )\right| \left|
N(y_{1}(t),t;y_{0}(\tau ),\tau )\right| d\tau \leq A_{5}(S,D)\sqrt{\sigma} ,\label{ii}
\end{equation}
\medskip 
\begin{equation}
\int\nolimits_{C_{1}}^{C_{2}}\left| F^{\prime }(\xi )\right| \left|
N(y_{0}(t),t;\xi ,0)\right| d\xi \leq \left\| F'
\right\| \leq A_{1}(u_{0},U_{0},\beta, D),  \label{iii}
\end{equation}
\medskip 
\begin{equation}
D\int_{0}^{t}\left| G_{y}(y_{0}(t),t;y_{1}(\tau ),\tau )\phi_{1}(\tau
)\right| d\tau \leq A_{4}(D,M,C_{1},C_{2})\sqrt{\sigma}, \label{olvidada1}
\end{equation}
\begin{equation}
\beta^{2}\int_{0}^{t}\left| G_{y}(y_{0}(t),t;y_{0}(\tau ),\tau )\frac{h(\tau
)}{f(\tau)}\right| d\tau \leq A_{6}(R,M,D,\beta,C_{2},C_{1})\;\sqrt{\sigma}  ,\label{olv2}
\end{equation}
\begin{equation}
\beta D\int_{0}^{t}\left| G_{y}(y_{0}(t),t;y_{0}(\tau ),\tau )\frac{\phi_{2}(\tau
)}{f(\tau)}\right| d\tau \leq A_{7}(M,D,C_{1})\;\sqrt{\sigma},\label{delhoy}
\end{equation}
\medskip 
\begin{equation}
\int_{0}^{t}\left| h'(\tau )\right| \left|
N(y_{0}(t),t;y_{0}(\tau ),\tau )\right| d\tau \leq A_{5}(S,D)\sqrt{\sigma}  ,\label{iibis}
\end{equation}
\begin{equation}
\beta^{2}\frac{h(t)}{f(t)}\leq \beta R, 
\end{equation}
\medskip where 
\[
A_{1}(u_{0},U_{0},\beta, D)=exp\left(\tfrac{\left\|u_{0}\right\|+\beta U_{0}}{D}\right) \left[\left\|\frac{u_{0}^{'}}{u_{0}}\right\|+\frac{\left\|u_{0}\right\|+\beta}{D} \right],
\]
\[
A_{2}(M,D,U_{0},C_{1})=\frac{M\sqrt{D}}{2\sqrt{\pi }}\left[ 2M+\frac{3}{C_{2}^{2}}\left( \frac{2D%
}{3e}\right) ^{3/2}\right] ,
\]
\[
A_{3}(R,D,\beta,C_{2},C_{1})=\frac{R\beta}{2\sqrt{D\pi}}(A_{31}+A_{32)}),
\]
\[A_{31}=\frac{3C_{2}-C_{1}}{2}\left(\frac{24D}{e(C_{2}-3C_{1})^{2}}\right)^{\frac{3}{2}},\quad\quad A_{32}=\frac{18\sqrt{6}}{e^{3/2}(C_{1}+C_{2})^{2}},
\]
\[
A_{4}(M,D,C_{2},C_{1})=\frac{M\sqrt{D}}{2\sqrt{\pi }}(A_{31}+A_{32)}),
\]
\[
A_{5}(S,D)=2{\frac{S}{%
\sqrt{D\pi }}},
\]
\[
A_{6}(R,M,D,\beta,C_{1})=\frac{\beta R M\sqrt{D}}{2\sqrt{\pi }}\left[ 2M+\frac{3}{C_{1}^{2}}\left( \frac{2D%
}{3e}\right) ^{3/2}\right], 
\]\[
A_{7}(M,D,\beta,C_{1}))=DM\left[ 2M+\frac{3}{C_{1}^{2}}\left( \frac{2D%
}{3e}\right) ^{3/2}\right] ,
\]

\medskip 
 \end{lemma}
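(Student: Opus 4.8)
The plan is to bound each of the eleven quantities by reducing it to the elementary pointwise estimates for the kernels $K$, $G=K(x,\cdot)-K(-x,\cdot)$ and $N=K(x,\cdot)+K(-x,\cdot)$ of (\ref{defG})--(\ref{defK}), and then to feed in the geometric information on the free boundaries supplied by Lemma \ref{cotasy} together with the Gaussian inequality (\ref{exp}). It is convenient to sort the integrals into families according to the kernel that appears.

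\emph{Neumann-kernel terms.} For the boundary-versus-boundary integrals (\ref{ii}), (\ref{iibis}) I would simply discard the Gaussian, using $|N(\cdot ,t;\cdot ,\tau )|\le (\pi D(t-\tau))^{-1/2}$, so that with $|h'|\le S$ one gets $\int_{0}^{t}|h'||N|\,d\tau\le \frac{2S}{\sqrt{\pi D}}\sqrt{t}\le\frac{2S}{\sqrt{\pi D}}\sqrt{\sigma}=A_{5}\sqrt{\sigma}$. For the initial-datum integrals (\ref{i}), (\ref{iii}), since $C_{1}>0$ the interval $[C_{1},C_{2}]$ lies in $(0,\infty)$, where $N(\cdot ,t;\xi ,0)$ is the nonnegative heat kernel with a reflecting boundary at the origin and hence $\int_{C_{1}}^{C_{2}}N(\cdot ,t;\xi ,0)\,d\xi\le\int_{0}^{\infty}N\,d\xi=1$; thus both integrals are dominated by $\|F'\|$, which I would bound by differentiating the closed form (\ref{efemay}) of $F$, controlling the exponential factor by $\exp((\|u_{0}\|+\beta U_{0})/D)$ and the algebraic factor by $\|u_{0}'/u_{0}\|+(\|u_{0}\|+\beta)/D$ to obtain $A_{1}$. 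Finally $\beta^{2}h(t)/f(t)\le\beta R$ is immediate from $\|h\|\le R$ and $f>\frac{3\beta}{2}>\beta$.

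\emph{Derivative-kernel terms.} This is the delicate family (\ref{olvidada})--(\ref{olv1}) and (\ref{olvidada1})--(\ref{delhoy}), since $|G_{y}|$ is controlled by terms of the form $|x\pm\xi|(t-\tau)^{-3/2}\exp(-(x\pm\xi)^{2}/4D(t-\tau))$, whose naive $\tau$-integral diverges. I would split $G_{y}$ into its direct part (from $K(x,\cdot)$) and its image part (from $K(-x,\cdot)$) and estimate them by different mechanisms. When the two evaluation points sit on the \emph{same} free boundary, e.g. $G_{y}(y_{1}(t),t;y_{1}(\tau),\tau)$, the direct part is tamed by the Lipschitz estimate (\ref{io2}): replacing $|y_{1}(t)-y_{1}(\tau)|$ by $(1+\beta)(1+M/\beta^{2})(t-\tau)$ cancels one half-power of $t-\tau$ and leaves an integrand $\lesssim (t-\tau)^{-1/2}$, whose integral is $O(\sqrt{\sigma})$; the image part has $y_{1}(t)+y_{1}(\tau)\ge C_{2}$ by (\ref{io3}), so (\ref{exp}) with $n=3$ turns the kernel into a bounded function of $\tau$, whose integral is $O(\sigma)\le O(\sqrt{\sigma})$. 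When the points sit on \emph{opposite} boundaries, e.g. $G_{y}(y_{1}(t),t;y_{0}(\tau),\tau)$, both $|y_{1}(t)-y_{0}(\tau)|\ge\frac{C_{2}-3C_{1}}{2}$ and $y_{1}(t)+y_{0}(\tau)\ge\frac{C_{1}+C_{2}}{2}$ are bounded away from zero (using (\ref{io1}), (\ref{io3}) and $C_{1}<U_{0}/2$), so (\ref{exp}) bounds both parts by constants, again giving $O(\sqrt{\sigma})$. Inserting the uniform data bounds $|\phi_{i}|\le M$, $\|h\|\le R$, $f>\frac{3\beta}{2}$ and collecting the numerical factors yields $A_{2},\dots,A_{7}$, with $A_{31}$ and $A_{32}$ recording the direct and image contributions of the opposite-boundary terms.

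The main obstacle is precisely this last family: the factor $(t-\tau)^{-3/2}$ in $G_{y}$ is not integrable on its own, and the estimate succeeds only because, for same-boundary terms, the Lipschitz continuity of the free boundary recovers one half-power of $t-\tau$, while for opposite-boundary terms the strict separation of the two free boundaries forces Gaussian decay through (\ref{exp}). All that then remains is the bookkeeping of the sharp upper and lower bounds for $|y_{i}(t)\pm y_{j}(\tau)|$ from Lemma \ref{cotasy}, which fixes the explicit constants.
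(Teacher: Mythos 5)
Your proposal is correct, and for the parts the paper proves in-line it coincides with the paper's argument: the Neumann-kernel terms (\ref{i}), (\ref{iii}), (\ref{ii}), (\ref{iibis}) are handled exactly as you do (mass bound $\int_0^\infty N(\cdot,t;\xi,0)\,d\xi\le 1$ plus the crude bound $|N|\le(\pi D(t-\tau))^{-1/2}$ with $|h'|\le S$), the bound on $\|F'\|$ is obtained by the same differentiation of (\ref{efemay}), and $\beta^2 h/f\le\beta R$ is dismissed the same way. The difference lies in the derivative-kernel family (\ref{olvidada})--(\ref{olv1}), (\ref{olvidada1})--(\ref{delhoy}): the paper does not prove these at all, but disposes of them with the single sentence ``Following the proof given in \cite{BrTa2006,BrNa2012} and taking $C_1<\frac{U_0}{2}$ we obtain...'', whereas you reconstruct the argument those references contain. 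Your reconstruction is the right one, and it is corroborated by the form of the paper's own constants: the Lipschitz cancellation for same-boundary terms produces the $2M$-type contributions in $A_2$, $A_6$, $A_7$; the separation $|y_1(t)-y_0(\tau)|\ge\frac{C_2-3C_1}{2}$ (which is where $C_1<\frac{U_0}{2}$, i.e. $C_2>3C_1$, is used, via (\ref{io1}) and (\ref{io3})) produces $A_{31}$ through the inequality (\ref{exp}) with $n=3$; and the image-kernel bound $y_i(t)+y_j(\tau)\ge\frac{C_1+C_2}{2}$ produces $A_{32}$. So your write-up is self-contained where the paper argues by citation, which makes visible exactly where each hypothesis of Lemma \ref{cotasy} and the restriction $\sigma\le 1$ enter; the cost is only length, and the benefit is that the reader need not consult \cite{BrTa2006,BrNa2012} (whose setting is not identical to this one) to check the constants.
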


\begin{proof}

To prove (\ref{i}) we consider 
\[
\int\nolimits_{C_{1}}^{C_{2}}\left| F^{\prime }(\xi )\right| \left|
N(y_{1}(t),t;\xi ,0)\right| d\xi \leq \left\| F^{\prime
}\right\| \int_{0}^{\infty }\left| N(y_{1}(t),t;\xi ,0)\right|
d\xi \leq \left\| F^{\prime}\right\| .
\]
From $(\ref{efemay})$ we have
\[
F'(y)=exp\left(\tfrac{1}{D}\int_{y}^{C_{2}}V_{0}(\xi)d\xi\right) \left[V^{'}_{0}(y)-\frac{1}{D}V_{0}^{2}(y)\right]
\] then
\[
\left\|F'\right\|\leq exp\left(\tfrac{\left\|V_{0}\right\|(C_{2}-C_{1})}{D}\right) \left[\left\|V_{0}^{'}\right\|+\tfrac{1}{D}\left\|V_{0}\right\|\right]\]
\[\leq exp\left(\tfrac{(\left\|u_{0}\right\|+\beta) U_{0}}{D}\right) \left[\left\|\tfrac{u_{0}^{'}}{u_{0}}\right\|+\tfrac{\left\|u_{0}\right\|+\beta}{D} \right]=A_{1}(u_{0},U_{0},\beta, D).
\]
Following the proof given in \cite{BrTa2006,BrNa2012} and taking $C_{1}<\frac{U_{0}}{2}$ we obtain $(\ref{olvidada})$, $(\ref{olv})$, $(\ref{olv1})$,$(\ref{olvidada1})$, $(\ref{olv2})$ and $(\ref{delhoy}).$ 

To prove (\ref{ii}) we take into account that 
\[
\left| N(y_{1}(t),t;y_{0}(\tau ),\tau )\right| \leq \frac{1}{%
\sqrt{\pi \left( t-\tau \right) }} 
\]
so, we obtain 
\[
\int_{0}^{t}\left| {h^{'}}(\tau )\right| \left|
N(y_{1}(t,t;y_{0}(\tau ),\tau )\right| d\tau \leq 2\sqrt{%
\frac{t}{D\pi }} S. 
\]

The inequalities (\ref{iii}) and (\ref{iibis}) are proved in the same way as (\ref{i}) and (\ref{ii}) respectively.
\end{proof}
\begin{lemma}\label{cotasyi}
Let $y_{01}$ and $y_{02}$ be the functions corresponding to $\phi_{21}$
and $\phi_{22}$ in $C^{0}[0,\sigma ]$ respectively, and $y_{11}$ and $%
y_{12}$ be the functions corresponding to $\phi_{11}$ and $\phi_{12}$ in $%
C^{0}[0,\sigma ]$ respectively with $
\max\limits_{t\in \left[ 0,\sigma \right]} \left| \phi_{ij}(t)\right| \leq
M,\,\quad i,j=1,2$. Under hypothesis Lemma \ref{cotasy} we have 
\begin{equation}
\left\{ 
\begin{array}{c}
\left| y_{01}(t)-y_{02}(t) \right| \leq
\frac{2D}{H}\sigma\left\| \phi_{11}-\phi_{12}\right\| _{\sigma } ,\\ 
\\ 
\left| y_{01}(t)-y_{02}(\tau) \right| \leq  \left(\beta+2\tfrac{DM}{H}\right)
\left| t-\tau \right| ,\text{ }i=1,2, \\ 
\\ 
\frac{C_{1}}{2}\leq y_{0i}(t) \leq \frac{3C_{1}}{2},\text{ }%
\forall t\in \left[ 0,\sigma \right] ,\text{ }i=1,2,
\end{array}
\right.
\end{equation}\label{desy}
and 
\begin{equation}
\left\{ 
\begin{array}{c}
\left| y_{11}(t)-y_{12}(t) \right| \leq
\frac{\beta +1}{\beta^{2}}\sigma \left\| \phi_{21}-\phi_{22}\right\| _{\sigma } ,\\ 
\\ 
\left| y_{1i}(t)-y_{1i}(\tau) \right| \leq
 (1+\beta)\left(1+\tfrac{M}{\beta^{2}}\right)\left| t-\tau \right| ,\text{ }i=1,2, \\ 
\\ 
\frac{C_{2}}{2}\leq y_{1i}(t) \leq \frac{3C_{2}}{2},%
\text{ }\forall t\in \left[ 0,\sigma \right] ,\text{ }i=1,2.
\end{array}
\right.
\end{equation}\label{desyi}
\end{lemma}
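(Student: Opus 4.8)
The plan is to exploit the fact that, throughout this lemma, the datum $f$ and the fixed function $h\in\Pi_{1}$ are common to both members of each pair, so that the only discrepancy between $y_{01}$ and $y_{02}$ (respectively between $y_{11}$ and $y_{12}$) is carried by the corresponding flux: $y_{0}$ depends only on $\phi_{2}$ through (\ref{ycero}), while $y_{1}$ depends only on $\phi_{1}$ through (\ref{ese}). The last two inequalities in each of the two systems are not new: since every $\phi_{ij}$ satisfies $\max_{[0,\sigma]}|\phi_{ij}|\le M$ and the hypotheses of Lemma \ref{cotasy} are in force, the time–Lipschitz bounds (\ref{io}), (\ref{io2}) and the confinement bounds (\ref{io1}), (\ref{io3}) hold verbatim for each individual $y_{0i}$ and $y_{1i}$. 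Hence only the first line of each system, the continuous–dependence estimate, needs a genuine argument.

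For the $y_{0}$–estimate I would simply subtract the two representations (\ref{ycero}). Because $C_{1}$, the term $-\beta^{2}\int_{0}^{t}f^{-1}\,d\tau$, the denominator $h$ and the factor $1-\beta/f$ are identical for $y_{01}$ and $y_{02}$, every common term cancels and one is left with
\[
y_{01}(t)-y_{02}(t)=-D\int_{0}^{t}\frac{\phi_{21}(\tau)-\phi_{22}(\tau)}{h(\tau)}\Bigl(1-\frac{\beta}{f(\tau)}\Bigr)d\tau .
\]
Using $h\ge H$ and $0<1-\beta/f<1$ (which follows from $f>\tfrac{3\beta}{2}$), the integrand is bounded by $H^{-1}\|\phi_{21}-\phi_{22}\|_{\sigma}$, and integrating over $[0,t]\subseteq[0,\sigma]$ yields a bound of the stated form, linear in $\sigma\,\|\phi_{21}-\phi_{22}\|_{\sigma}$ (the factor $2$ in the printed constant being a harmless over-estimate of $1-\beta/f$). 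It is worth emphasising that this difference is driven by the perturbation of $\phi_{2}$, the flux entering $y_{0}$, so the norm in the first inequality of the $y_{0}$–block is to be read as $\|\phi_{21}-\phi_{22}\|_{\sigma}$.

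The $y_{1}$–estimate is the only genuinely nonlinear step, and it is here that the \emph{main obstacle} lies, because $y_{1}$ depends on $\phi_{1}$ through a logarithm. Subtracting the two representations (\ref{ese}) leaves
\[
y_{11}(t)-y_{12}(t)=\frac{D(\beta+1)}{\beta^{2}}\Bigl[\ln\Bigl(1-\!\int_{0}^{t}\!\phi_{11}\Bigr)-\ln\Bigl(1-\!\int_{0}^{t}\!\phi_{12}\Bigr)\Bigr],
\]
and I would control the right-hand side by the mean value theorem for $\ln$: there is a value $\theta$ lying between the two arguments with
\[
|y_{11}(t)-y_{12}(t)|\le\frac{D(\beta+1)}{\beta^{2}\,\theta}\Bigl|\int_{0}^{t}\bigl(\phi_{11}-\phi_{12}\bigr)d\tau\Bigr|\le\frac{D(\beta+1)}{\beta^{2}\,\theta}\,\sigma\,\|\phi_{11}-\phi_{12}\|_{\sigma}.
\]
The crux is to keep $\theta$ bounded away from $0$: since $|\int_{0}^{t}\phi_{1i}|\le M\sigma$, both logarithm arguments, and hence $\theta$, are at least $1-M\sigma$, which the smallness conditions on $\sigma$ inherited from Lemma \ref{cotasy} keep uniformly positive (this is exactly what makes the logarithm in (\ref{ese}) well defined in the first place). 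Absorbing the then-controlled factor $D/\theta$ into the constant produces a bound of the announced shape $c(\beta)\,\sigma\,\|\phi_{11}-\phi_{12}\|_{\sigma}$. Thus, once the argument of the logarithm is bounded below, the entire lemma reduces to the elementary estimates already established in Lemma \ref{cotasy}, and the only point demanding care is the logarithmic nonlinearity in the $y_{1}$–bound.
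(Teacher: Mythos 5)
Your proposal is correct and takes essentially the same route as the paper: the paper's entire proof of this lemma is the single line ``It follows immediately from definitions (\ref{ycero})--(\ref{ese}) and assumptions on data,'' and your argument --- subtracting the two representations, using $h\ge H$ and $0<1-\beta/f<1$ for the $y_{0}$-block, invoking Lemma \ref{cotasy} verbatim for the Lipschitz and confinement bounds, and controlling the logarithm in the $y_{1}$-block by the mean value theorem with its argument bounded below --- is precisely the computation that line leaves implicit. You are in fact more explicit than the paper on the two delicate points: that the indices in the first estimate of each block must be read with $y_{0i}$ driven by $\phi_{2i}$ and $y_{1i}$ by $\phi_{1i}$, and that the constant in the $y_{1}$-estimate requires the argument $1-\int_{0}^{t}\phi_{1i}$ to stay uniformly away from zero, which is the same implicit assumption under which the paper's stated constants (here and in Lemma \ref{cotasy}) are valid at all.
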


\begin{proof}
It follows inmediatly from definitions (\ref{ycero})-(\ref{ese}) and assumptions on data.
\end{proof}

\begin{lemma}\label{cotasintyi}
If we take $\sigma \leq 1$, $
\tfrac{4M}{H}\left(\beta + \tfrac{2DM}{H}\right)\sigma \leq 1$
and we assume the hypothesis of Lemma \ref{cotasyi} then we have 

\begin{equation}
\int_{C_{1}}^{C_{2}}\left| F^{^{\prime }}(\xi )\right| \left|
N(y_{11}(t),t;\xi ,0)-N(y_{12}(t),t;\xi ,0)\right| d\xi
\label{1iiprimero}
\end{equation}
\[
\leq \frac{2\left\|F^{\prime }\right\|_{[C_{1},C_{2}]}}{D\sqrt{\pi}}\left\|
\phi_{11}-\phi_{12}\right\|_{\sigma} \sqrt{\sigma } \leq P_{1}(u_{0},D,\beta,U_{0}) \left\| \vec{\phi_{1}^{*}}-\vec{\phi_{2}^{*}}\right\|\sqrt{\sigma},
\]
\begin{equation}
D\int_{0}^{t}\left| \phi_{11}(\tau )G_{y}(y_{11}(t),t;y_{11}(\tau
),\tau )-\phi_{12}(\tau )G_{y}(y_{12}(t),t;y_{12}(\tau ),\tau )\right|
d\tau  \label{1v}
\end{equation}
\[
\leq P_{2}(M,D,C_{2})\sigma \left\| \phi_{11}-\phi_{12}\right\|\leq P_{2}(M,D,C_{2})\ \left\| \vec{\phi_{1}^{*}}-\vec{\phi_{2}^{*}}\right\| \;\sqrt{\sigma} ,
\]

\begin{equation}
\beta^{2}\int_{0}^{t} \tfrac{h(\tau)}{f(\tau)}\left|G_{y}(y_{11}(t),t;y_{01}(\tau
),\tau )-G_{y}(y_{12}(t),t;y_{02}(\tau ),\tau
)\right| d\tau  \label{1iiii} 
\end{equation}
\[\leq  P_{3}(R,\beta,C_{1},C_{2})\left\| \vec{\phi_{1}^{*}}-\vec{\phi_{2}^{*}}\right\|\sqrt{\sigma} , \nonumber
\]
\begin{equation}
D\beta\int_{0}^{t} \tfrac{1}{f(\tau)}\left| \phi_{21}(\tau )G_{y}(y_{11}(t),t;y_{01}(\tau
),\tau )-\phi_{22}(\tau )G_{y}(y_{12}(t),t;y_{02}(\tau ),\tau
)\right| d\tau  \label{1iiiiii} 
\end{equation}
\[\leq P_{4}(D,M,C_{1},C_{2})\left\| \vec{\phi_{1}^{*}}-\vec{\phi_{2}^{*}}\right\|\sqrt{\sigma},  \nonumber
\]
\begin{equation}
\int_{0}^{t}\left|h'(\tau )\right| \left|
N(y_{11}(t),t;y_{01}(\tau ),\tau)-N(y_{12}(t),t,y_{02}(\tau ),\tau )\right| d\tau  \label{1iii}
\end{equation}
\[
\leq P_{5}(S,D,C_{1},C_{2})\left\| \vec{\phi_{1}^{*}}-\vec{\phi_{2}^{*}}\right\|\sqrt{\sigma} ,
\]

\begin{equation}
\int_{C_{1}}^{C_{2}}\left| F^{^{\prime }}(\xi )\right| \left|
N(y_{01}(t),t;\xi ,0)-N(y_{02}(t),t;\xi ,0)\right| d\xi
\label{1ii}
\end{equation}
\[
\leq \frac{2\left\|F^{\prime }\right\|_{[C_{1},C_{2}]}}{D\sqrt{\pi}}\left\|
\phi_{2,1}-\phi_{2,2}\right\|_{\sigma} \sqrt{\sigma } \leq P_{1}(u_{0},D,\beta,U_{0}) \left\|\vec{\phi_{1}^{*}}-\vec{\phi_{2}^{*}}\right\|\sqrt{\sigma},
\]
\begin{equation}
D\int_{0}^{t}\left| \phi_{11}(\tau )G_{y}(y_{01}(t),t;y_{11}(\tau
),\tau )-\phi_{12}(\tau )G_{y}(y_{02}(t),t;y_{12}(\tau ),\tau )\right|
d\tau  \label{1iibis}
\end{equation}
\[
\leq  P_{4}(D,M,C_{1},C_{2})\ \left\| \vec{\phi_{1}^{*}}-\vec{\phi_{2}^{*}}\right\| \;\sqrt{\sigma} ,
\]
\begin{equation}
\beta^{2}\int_{0}^{t}\frac{h(\tau)}{f(\tau)}\left|G_{y}(y_{01}(t),t;y_{01}(\tau
),\tau )-G_{y}(y_{02}(t),t;y_{02}(\tau ),\tau )\right|
d\tau  \label{1v}
\end{equation}
\[
\leq P_{6}(D,H,R,M,C_{1})\left\| \vec{\phi_{1}^{*}}-\vec{\phi_{2}^{*}}\right\| \;\sqrt{\sigma},
\]
\begin{equation}
D\int_{0}^{t}\left| \phi_{21}(\tau )G_{y}(y_{01}(t),t;y_{01}(\tau
),\tau )-\phi_{22}(\tau )G_{y}(y_{02}(t),t;y_{02}(\tau ),\tau )\right|
d\tau  \label{1vbis}
\end{equation}
\[
\leq P_{7}(M,D,C_{1})\ \left\| \vec{\phi_{1}^{*}}-\vec{\phi_{2}^{*}}\right\| \;\sqrt{\sigma},
\]

\begin{equation}
\int_{0}^{t}\left| h'(\tau )\right| \left|
N(y_{11}(t),t;y_{01}(\tau ),\tau
)-N(y_{12}(t),t,y_{0 2}(\tau ),\tau )\right| d\tau  \label{1vi}
\end{equation}
\[
\leq P_{5}(S,D,C_{1},C_{2})\left\| \vec{\phi_{1}^{*}}-\vec{\phi_{2}^{*}}\right\| \sqrt{\sigma} ,
\]

where 
\begin{equation}
P_{1}(u_{0},D,\beta,U_{0})=\tfrac{2}{D\sqrt{\pi}}\left[\left(\left\| u_{0}\right\|+\beta\right)exp\left(\tfrac{U_{0}}{\beta}\left(\left\| u_{0}\right\|+\beta\right)\right)+\tfrac{\left(\left\| u_{0}\right\|+\beta\right)^{2}}{D}\right],
\label{defp1}
\end{equation}
\begin{equation}
P_{2}(M,D,C_{2})=\tfrac{\sqrt{D}}{4\sqrt{\pi }}\left[ 6M+\tfrac{3}{C_{2}^{2}}\left( \tfrac{2%
}{3e}\right) ^{3/2}+\tfrac{6M}{C_{2}^{2}}\left( \tfrac{6}{e}\right) ^{3/2}\right],
\label{defp2}
\end{equation}
\begin{equation}
P_{3}(R,\beta,C_{1},C_{2})=R\beta(P_{31}+P_{32}),
\end{equation} with
\begin{equation}
P_{31}(C_{1},C_{2})=\tfrac{1}{\sqrt{\pi }e^{3/2}}\left[ \tfrac{\sqrt{6}\left(
3C_{2}-C_{1}\right) ^{2}}{16(C_{2}-3C_{1})^{3}}+\tfrac{27\sqrt{3}}{4}+\tfrac{12%
\sqrt{6}}{(C_{2}-3C_{1})^{3}}+\tfrac{6\sqrt{3}}{(C_{2}+C_{1})^{3}}\right],
\label{defp31}
\end{equation}
\begin{equation}
P_{32}(C_{1},C_{2})=\tfrac{12\sqrt{6}}{\sqrt{\pi }e^{3/2}}\left[ \tfrac{1}{%
(C_{2}-3C_{1})^{3}}+\tfrac{9}{8}+\tfrac{\left( 3C_{2}-C_{1}\right) ^{2}}{%
8(C_{2}-3C_{1})^{3}}+\tfrac{1}{(C_{2}+C_{1})^{2}}\right],  \label{defp32}
\end{equation}
\begin{equation}
P_{4}(D,M,C_{1},C_{2})=D\left[M(P_{31}+P_{32})+P_{41}\right],
\end{equation} where
\begin{equation}
P_{41}(C_{1},C_{2})=\tfrac{\sqrt{6}}{\sqrt{\pi e}}\left[ \tfrac{1}{%
(C_{2}-3C_{1})^{2}}+\tfrac{1}{(C_{2}+C_{1})^{2}}\right]  ,\label{defp3}
\end{equation}
\begin{equation}
P_{5}(S,D,C_{1},C_{2})=\tfrac{6^{3/2}SD}{\sqrt{\pi }e^{3/2}}\left[ \frac{%
3C_{2}-C_{1}}{(C_{2}-3C_{1})^{3}}+\frac{3}{(C_{2}+C_{1})^{2}}\right],
\label{defp5}
\end{equation}
\begin{equation}
P_{6}(D,H,R,M,C_{1})=
\beta R\left\lbrace(2D)^{-1} (D\pi)^{-1/2}\left[\tfrac{2D}{H}+\tfrac{2}{H}\left(\beta + \tfrac{2DM}{H}\right)^{2}\right]\right.
\end{equation}
\[ \left. +\left(\tfrac{6}{eC_{1}^{2}}\right)^{3/2}\tfrac{18C_{1}^{2}+1}{4\sqrt{\pi}}\tfrac{4D}{H}\right\rbrace,
\]

\begin{equation}
P_{7}(M,D,C_{1})=\tfrac{\sqrt{D}}{4\sqrt{\pi }}\left[ 6M+\tfrac{3}{C_{1}^{2}}\left( \tfrac{2%
}{3e}\right) ^{3/2}+\tfrac{6M}{C_{1}^{2}}\left( \tfrac{6}{e}\right) ^{3/2}\right].
\label{defp2}
\end{equation}
\end{lemma}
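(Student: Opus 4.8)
The plan is to establish all of the estimates \eqref{1iiprimero}--\eqref{1vi} by a single common template, treating the ten inequalities uniformly rather than individually. Each left-hand side has the form $\int_0^t |a_1(\tau)k_1(\tau)-a_2(\tau)k_2(\tau)|\,d\tau$ (or a spatial integral of a difference of Neumann kernels), where $a_i$ is a coefficient assembled from $\phi_{1i},\phi_{2i},h$ and $k_i$ is a value of $G_y$ or $N$ evaluated along the $i$-th pair of free boundaries. The first step, for every term, is the telescoping decomposition
\[
a_1 k_1 - a_2 k_2 = (a_1 - a_2)\,k_1 + a_2\,(k_1 - k_2),
\]
which separates the contribution of the difference of the densities $\phi$ from the contribution of the displacement of the free boundaries. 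I would bound the two pieces separately and then recombine, using $\sigma\le 1$ at the end to turn every spare factor $\sigma$ into $\sqrt\sigma$ and collect everything into the stated form $P_i\,\|\vec{\phi_1^*}-\vec{\phi_2^*}\|\,\sqrt\sigma$.

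For the density-difference piece $(a_1-a_2)k_1$, I would factor out $\max_{[0,\sigma]}|\phi_{1,1}-\phi_{1,2}|$ or $\max|\phi_{2,1}-\phi_{2,2}|$, both dominated by $\|\vec{\phi_1^*}-\vec{\phi_2^*}\|$, and then bound the remaining kernel factor $k_1$ pointwise. Here the exponential inequality \eqref{exp} is the workhorse: a factor $G_y$ carries $(t-\tau)^{-1/2}$ times a Gaussian, whose product is controlled by \eqref{exp} with $n=1$ or, when a spatial gap is available, with $n=3$; integrating the resulting $(t-\tau)^{-1/2}$ over $\tau\in[0,t]$ produces the characteristic $\sqrt\sigma$. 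The bounds \eqref{io1} and \eqref{io3} of Lemma \ref{cotasy}, namely $y_0\in[\tfrac{C_1}{2},\tfrac{3C_1}{2}]$ and $y_1\in[\tfrac{C_2}{2},\tfrac{3C_2}{2}]$, are precisely what makes the Gaussian arguments explicit and keeps the denominators $C_2-3C_1$, $C_1+C_2$, etc. appearing in the $P_i$ bounded away from $0$; note that the finiteness of $P_{31},P_{32}$ requires $C_2-3C_1=U_0-2C_1>0$, i.e. $C_1<U_0/2$ as in Lemma \ref{cotasint}.

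For the boundary-displacement piece $a_2(k_1-k_2)$, I would apply the mean value theorem to the kernel in each of its two spatial slots separately, writing $k_1-k_2$ as a sum of increments in the observation point and in the source point, each increment being a second-order kernel derivative times $|y_{\cdot 1}(t)-y_{\cdot 2}(t)|$ or $|y_{\cdot 1}(\tau)-y_{\cdot 2}(\tau)|$. Lemma \ref{cotasyi} supplies the two kinds of control needed: the same-time differences are $\le\mathrm{const}\cdot\sigma\,\|\vec{\phi_1^*}-\vec{\phi_2^*}\|$, while the Lipschitz-in-time estimates $|y_{\cdot i}(t)-y_{\cdot i}(\tau)|\le L|t-\tau|$, with $L=\beta+2DM/H$ or $L=(1+\beta)(1+M/\beta^2)$, keep the spatial gap proportional to $t-\tau$ on the diagonal.

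The main obstacle, and the point at which the two geometric regimes must be kept apart, is that the mean value theorem produces the strongly singular second kernel derivative $\sim(t-\tau)^{-3/2}$, for which $\int_0^t(t-\tau)^{-3/2}d\tau$ diverges. For the off-diagonal terms (observation at $y_1$, source at $y_0$, as in \eqref{1iiii} and \eqref{1iiiiii}) this is defused by the fixed gap $|y_1-y_0|\ge (C_2-3C_1)/2>0$: the Gaussian then lets \eqref{exp} convert $(t-\tau)^{-3/2}$ into a bounded integrable expression, which is the origin of the $(C_2-3C_1)^{-3}$ factors in $P_{31},P_{32}$. For the diagonal terms (both arguments on the same boundary, as in \eqref{1vbis}) no such gap exists, so instead I would invoke the Lipschitz bound $|x-\xi|\le L(t-\tau)$, so that the offending $(t-\tau)^{-3/2}$ is multiplied either by $|x-\xi|\le L(t-\tau)$ or by $|x_1-x_2|+|\xi_1-\xi_2|=O(\sigma)$, restoring an integrable $(t-\tau)^{-1/2}$ and hence the $\sqrt\sigma$ factor; this is exactly where the auxiliary smallness hypothesis $\tfrac{4M}{H}\bigl(\beta+\tfrac{2DM}{H}\bigr)\sigma\le 1$ is used, to keep the constants $P_6,P_7$ uniform. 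Assembling the numerical factors from \eqref{exp} together with the geometric bounds of Lemmas \ref{cotasy} and \ref{cotasyi}, exactly as in \cite{BrTa2006,BrNa2012}, then yields the explicit constants and completes the proof.
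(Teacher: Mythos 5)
Your general template --- the split $a_1k_1-a_2k_2=(a_1-a_2)k_1+a_2(k_1-k_2)$, the use of (\ref{exp}) together with the geometric bounds of Lemmas \ref{cotasy} and \ref{cotasyi}, the fixed gap $C_2-3C_1>0$ for the off-diagonal terms, and the mean value theorem for the reflected (image) kernels, whose source stays a distance of order $C_1$ from the boundary --- matches the paper's strategy (the paper delegates most terms to \cite{BrNa2012} and proves in detail only the diagonal estimate). But the gap in your argument is in exactly the place where the detail matters: the diagonal terms giving $P_6$ and $P_7$, where observation point and source lie on the same free boundary. There the mean value theorem applied slot-by-slot produces a second-derivative kernel of size $(t-\tau)^{-3/2}$ (the portion $\frac{1}{2D(t-\tau)}K$ of $K_{yy}$ is not helped by the Lipschitz bound $|x-\xi|\le L(t-\tau)$, which only controls the portion $\frac{(x-\xi)^2}{4D^2(t-\tau)^2}K$), multiplied by the \emph{same-time} boundary differences $|y_{01}(t)-y_{02}(t)|,\,|y_{01}(\tau)-y_{02}(\tau)|=O(\sigma)\|\vec{\phi_{1}^{*}}-\vec{\phi_{2}^{*}}\|$. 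Your assertion that multiplication by this $O(\sigma)$ factor ``restores an integrable $(t-\tau)^{-1/2}$'' is false: the product is $\sigma(t-\tau)^{-3/2}$, and $\int_0^t(t-\tau)^{-3/2}d\tau=+\infty$, because the same-time differences do not vanish as $\tau\to t$. So your proof of the $P_6$ bound (\ref{1v}) and of (\ref{1vbis}) collapses at precisely this step.

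What is needed --- and what the paper does --- is to avoid differentiating the kernel a second time on the diagonal altogether. Write $K_y(x,t;\xi,\tau)=-\frac{x-\xi}{2D(t-\tau)}K(x,t;\xi,\tau)$ and apply the product decomposition to the numerator $(x-\xi)K$. The quantities that then appear are (i) the \emph{difference of increments} $\bigl[y_{01}(t)-y_{01}(\tau)\bigr]-\bigl[y_{02}(t)-y_{02}(\tau)\bigr]$, which by (\ref{ycero}) is an integral over $[\tau,t]$ of $\phi_{21}-\phi_{22}$ (the function $h$ being the same for both), hence bounded by $\frac{2D}{H}\|\vec{\phi_{1}^{*}}-\vec{\phi_{2}^{*}}\|(t-\tau)$, i.e.\ linear in $(t-\tau)$, unlike the same-time differences; and (ii) the Gaussian ratio $1-e^{m(t,\tau)}$, whose exponent $m$ is likewise $O(t-\tau)$ after factoring the difference of squares, and is handled by $|1-e^{m}|\le 2|m|$ --- this is the actual role of the hypothesis $\frac{4M}{H}\left(\beta+\frac{2DM}{H}\right)\sigma\le1$ (it forces $|m|\le1$), not the ``uniformity of constants'' you ascribe to it. These factors of $(t-\tau)$ cancel the prefactor $\frac{1}{2D(t-\tau)}$ and leave only $K\sim(t-\tau)^{-1/2}$, which integrates to the stated $\sqrt\sigma$. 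Without this cancellation-of-increments argument the diagonal estimates, and hence the lemma, cannot be obtained.
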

\begin{proof}
The inequalities (\ref{1iiprimero})-(\ref{1iibis}) and (\ref{1vbis})-(\ref{1vi}) are obtained following \cite{BrNa2012}.

We will show the proof of (\ref{1v}), following \cite{Sh}. We write 
\[
\left| G_{y}(y_{01}(t),t;y_{01}(\tau ),\tau
)-G_{y}(y_{02}(t),t;y_{02}(\tau ),\tau )\right| 
\]
\[
\leq \left|
K_{y}(y_{01}(t),t;y_{01}(\tau ),\tau )-K_{y}(y_{02}(t),t;y_{02}(\tau ),\tau )\right|\] 
\[+\left|
K_{y}(-y_{01}(t),t;y_{01}(\tau ),\tau )-K_{y}(-y_{02}(t),t;y_{02}(\tau ),\tau )\right|. 
\]
Taking into account that 
\[
\left|
K_{y}(y_{01}(t),t;y_{01}(\tau ),\tau )-K_{y}(y_{02}(t),t;y_{02}(\tau ),\tau )\right|\] 
\[
\leq(2D(t-\tau))^{-1} \left|K(y_{01}(t),t;y_{01}(\tau ),\tau )\left[\left(y_{01}(t)-y_{01}(\tau )\right)-\left(y_{02}(t)-y_{02}(\tau )\right)\right]\right. 
\]
\[
+\left.
\left[K(y_{01}(t),t;y_{01}(\tau ),\tau )-K(y_{02}(t),t;y_{02}(\tau ),\tau )\right]\left(y_{02}(t)-y_{02}(\tau )\right)\right|\] 
\[
\leq (2D(t-\tau))^{-1} K(y_{01}(t),t;y_{01}(\tau ),\tau )
\left|\left[\left(y_{01}(t)-y_{01}(\tau )\right)-\left(y_{02}(t)-y_{02}(\tau )\right)\right]\right.
 \]
\[
\left.+\left[1-exp(m(t,\tau)\right]\left(y_{02}(t)-y_{02}(\tau )\right)
\right|, 
\]
where
\[
m(t,\tau)=\frac{\left(y_{01}(t)-y_{01}(\tau )\right)^{2}-\left(y_{02}(t)-y_{02}(\tau )\right)^{2}}{4D(t-\tau)}
\]
\[=\frac{\left[\left(y_{01}(t)-y_{01}(\tau )\right)-\left(y_{02}(t)-y_{02}(\tau )\right)\right]\left[\left(y_{01}(t)-y_{01}(\tau )\right)+\left(y_{02}(t)-y_{02}(\tau )\right)\right]}{4D(t-\tau)}.
\]
We have 
\[
\left|\left(y_{01}(t)-y_{01}(\tau )\right)-\left(y_{02}(t)-y_{02}(\tau )\right)\right| \leq D\int_{\tau}^{t}\tfrac{1}{h(\eta)} \left| 1- \tfrac{\beta}{f(\eta)}\right|\left|\phi_{21}(\eta)-\phi_{22}(\eta)\right| d\eta 
\]
\[
\leq \tfrac{2D}{H}\left\| \vec{\phi_{1}^{*}}-\vec{\phi_{2}^{*}}\right\|(t-\tau),
\]
and 
\[
\left|\left(y_{01}(t)-y_{01}(\tau )\right)+\left(y_{02}(t)-y_{02}(\tau )\right)\right| \leq 2\left(\beta + \tfrac{2DM}{H}\right)(t-\tau)\leq 4\left(\beta + \tfrac{2DM}{H}\right)\sigma,
\]
then
\[
\left| m(t,\tau)\right|\leq\tfrac{2}{H}\left\| \vec{\phi_{1}^{*}}-\vec{\phi_{2}^{*}}\right\|\left(\beta + \tfrac{2DM}{H}\right)\sigma,
\]
and taking into account that $\left\| \vec{\phi_{1}^{*}}-\vec{\phi_{2}^{*}}\right\|\leq 2M$ we have
\[
\left| m(t,\tau)\right|\leq\tfrac{4M}{H}\left(\beta + \tfrac{2DM}{H}\right)\sigma.
\]
If we assume that $\sigma$ satisfies \[\tfrac{4M}{H}\left(\beta + \tfrac{2DM}{H}\right)\sigma \leq 1,
\]
we obtain that 
\[\left|1-exp(m(t,\tau)\right|\leq 2\left| m(t,\tau)\right|\leq \tfrac{2}{H}\left(\beta + \tfrac{2DM}{H}\right)\left\| \vec{\phi_{1}^{*}}-\vec{\phi_{2}^{*}}\right\|\sigma.
\]
Therefore
\[
\left|
K_{y}(y_{01}(t),t;y_{01}(\tau ),\tau )-K_{y}(y_{02}(t),t;y_{02}(\tau ),\tau )\right|\leq 
\] 
\[
\leq (2D)^{-1} K(y_{01}(t),t;y_{01}(\tau ),\tau )
\left[\tfrac{2D}{H}+\tfrac{2}{H}\left(\beta + \tfrac{2DM}{H}\right)^{2}\sigma\right]\left\| \vec{\phi_{1}^{*}}-\vec{\phi_{2}^{*}}\right\|
 \]
\[
\leq (4D)^{-1} (D\pi(t-\tau))^{-1/2}
\left[\tfrac{2D}{H}+\tfrac{2}{H}\left(\beta + \tfrac{2DM}{H}\right)^{2}\sigma\right]\left\| \vec{\phi_{1}^{*}}-\vec{\phi_{2}^{*}}\right\|.
 \]

Using the mean value theorem we may write  
\[\left|
K_{y}(-y_{01}(t),t;y_{01}(\tau ),\tau )-K_{y}(-y_{02}(t),t;y_{02}(\tau ),\tau )\right| 
\]
\[\leq \left|
K(n(t,\tau),t;0,\tau)\left( \frac{n^{2}(t,\tau)}{4D^{2}(t-\tau)^{2}}-\frac{1}{2D(t-\tau)}\right)\right| \left|y_{01}(t)+y_{01}(\tau)-y_{02}(t)- y_{02}(\tau)\right|
\]
where $n=n\left(
t,\tau\right)$ is between $y_{01}(t)+y_{01}(\tau)\;$ and $y_{02}(t)+ y_{02}(\tau)$.

Since
\[ \left|y_{01}(t)+y_{01}(\tau)-y_{02}(t)- y_{02}(\tau)\right|\leq \frac{4D}{H}\sigma \left\| \vec{\phi_{1}^{*}}-\vec{\phi_{2}^{*}}\right\|,
\]
and
\[C_{1}\leq n\left(
t,\tau\right)\leq 6 C_{1},\]
by using $(\ref{exp})$ we have
\[\left|
K(n(t,\tau),t;0,\tau)\left( \frac{n^{2}(t,\tau)}{4D^{2}(t-\tau)^{2}}-\frac{1}{2D(t-\tau)}\right)\right|\leq\left(\frac{6}{eC_{1}^{2}}\right)^{3/2}\frac{18C_{1}^{2}+1}{4\sqrt{\pi}},
\]
then
\[\left|
K_{y}(-y_{01}(t),t;y_{01}(\tau ),\tau )-K_{y}(-y_{02}(t),t;y_{02}(\tau ),\tau )\right| \leq \left(\tfrac{6}{eC_{1}^{2}}\right)^{3/2}\tfrac{18C_{1}^{2}+1}{4\sqrt{\pi}}\tfrac{4D}{H}\sigma \left\| \vec{\phi_{1}^{*}}-\vec{\phi_{2}^{*}}\right\|.
\]
Collecting the results we have
\[
\left| G_{y}(y_{01}(t),t;y_{01}(\tau ),\tau
)-G_{y}(y_{02}(t),t;y_{02}(\tau ),\tau )\right| 
\]
\[
\leq \left\lbrace(4D)^{-1} (D\pi(t-\tau))^{-1/2}
\left[\tfrac{2D}{H}+\tfrac{2}{H}\left(\beta + \tfrac{2DM}{H}\right)^{2}\right]+\left(\tfrac{6}{eC_{1}^{2}}\right)^{3/2}\tfrac{18C_{1}^{2}+1}{4\sqrt{\pi}}\tfrac{4D}{H}\right\rbrace \left\| \vec{\phi_{1}^{*}}-\vec{\phi_{2}^{*}}\right\|,
\]
then
\[
\beta^{2}\int_{0}^{t}\frac{h(\tau)}{f(\tau)}\left|G_{y}(y_{01}(t),t;y_{01}(\tau
),\tau )-G_{y}(y_{02}(t),t;y_{02}(\tau ),\tau )\right|d\tau 
\]
\[\leq P_{6}(D,H,R,M,C_{1}\left\| \vec{\phi_{1}^{*}}-\vec{\phi_{2}^{*}}\right\|\sqrt{\sigma},
 \]
where 
\[P_{6}(D,H,R,M,C_{1})=
\beta R\left\lbrace(2D)^{-1} (D\pi)^{-1/2}\left[\tfrac{2D}{H}+\tfrac{2}{H}\left(\beta + \tfrac{2DM}{H}\right)^{2}\right]\right.
\]
\[ \left. +\left(\tfrac{6}{eC_{1}^{2}}\right)^{3/2}\tfrac{18C_{1}^{2}+1}{4\sqrt{\pi}}\tfrac{4D}{H}\right\rbrace.
\]

Then, (\ref{1v}) has been proved

\end{proof}

\begin{theorem}\label{teocont} Let hypothesis $(\ref{hip})$ be. Fixed $0<C_{1}<\frac{U_{0}}{2}$ and $h\in \Pi$. If $\sigma $ satisfies the following inequalities 

\begin{equation}
\sigma \leq 1,\,\quad 2(1+\beta)\left(1+\frac{M}{\beta^{2}}\right)\sigma \leq C_{2},\quad\quad \; \label{nose}
\end{equation}
\begin{equation}
\left(\beta+2\frac{MD}{H}\right)\sigma\leq C_{1},\quad\quad\tfrac{4M}{H}\left(\beta + \tfrac{2DM}{H}\right)\sigma \leq 1,
\end{equation}
\begin{equation}
H_{1}\left( C_{1},C_{2},U_{0},M,D,\beta,R,S,\sigma  \right) \leq 1,
\label{ache1}
\end{equation}
\begin{equation}
H_{2}(C_{1},C_{2},U_{0},M,D,\beta,R,S,\sigma  )\leq 1,  \label{ache2}
\end{equation}
where $M$ is given by 
\begin{equation}
M\left(u_{0,}U_{0},f,D,\beta,R\right)=1+\left(\frac{1}{2-D}+\frac{\|f\|}{\beta(3-D)}\right)2A_{1}+2\frac{\|f\|}{3-D}R, \label{erre}
\end{equation}
and 
\[
H_{1}\left( C_{1},U_{0},f,M,D,\beta,R,S,\sigma\right) =\left\lbrace\left(\frac{2}{2-D}\right) \left[A_{2}+A_{3}+A_{4}+\frac{2 S}{\sqrt{\pi D}}\right]\right.
\]
\begin{equation}
\left. +\frac{2 \|f\|}{\beta (3-D)}\left[A_{4}+A_{5}+A_{6}+ \frac{2 S}{\sqrt{\pi D}}\right]\right\rbrace\sqrt{\sigma} ,\label{defache1}
\end{equation}

\[
H_{2}\left(C_{1},U_{0},f,M,D,\beta,R,S,\sigma  \right) = \left\lbrace\frac{2}{2-D}\left[P_{1}+P_{2}+P_{3}+P_{4}+P_{5}\right]\right.
\]
\begin{equation}
\left. +\frac{2 \|f\|}{\beta (3-D)}\left[P_{1}+P_{4}+P_{5}+P_{6} +P_{7}\right]\right\rbrace\sqrt{\sigma}, \label{defache2}
\end{equation}
then the map $\chi:C_{M,\sigma }\longrightarrow C_{M,\sigma }$ is well defined and it is a contraction map. Therefore there exists a unique solution $\phi_{1}^{*}$, $\phi_{2}^{*}$ on $C_{M,\sigma }$ to the system of
integral equations (\ref{ecintegralf}) and (\ref{ecintegralf1}).
\end{theorem}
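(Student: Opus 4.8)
The plan is to verify the two hypotheses of the Banach fixed point theorem for $\chi$ on the closed ball $C_{M,\sigma}$: that $\chi$ maps $C_{M,\sigma}$ into itself, and that $\chi$ is a contraction there. Both steps reduce to collecting, term by term, the estimates already established in Lemmas \ref{cotasy}--\ref{cotasintyi}; the smallness conditions (\ref{nose})--(\ref{ache2}) on $\sigma$ are exactly what is needed to close the argument.

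First, for the self-map property, fix $\vec{\phi^*}\in C_{M,\sigma}$ so that $\max|\phi_i|\le M$. The hypotheses (\ref{nose}) guarantee, via Lemma \ref{cotasy}, that the associated free boundaries $y_0,y_1$ stay in the strips $[C_1/2,3C_1/2]$ and $[C_2/2,3C_2/2]$ and are Lipschitz, so that every kernel in (\ref{F1})--(\ref{F2}) is well behaved and Lemma \ref{cotasint} applies. I would estimate $\max|\chi_1|$ and $\max|\chi_2|$ separately, using $f(t)>\tfrac{3\beta}{2}$ and $0<D<2$ to bound the two prefactors by $\frac{2}{2-D}$ and $\frac{2f(t)}{2f(t)-D\beta}\le\frac{2\|f\|}{\beta(3-D)}$. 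Each integrand then splits into a part independent of $\sigma$ — namely the two terms $\int NF'$, each bounded by $A_1$, together with the term $\beta^2 h/f\le\beta R$ — and a family of terms carrying a factor $\sqrt\sigma$, bounded by $A_2,\dots,A_7$. The constant $M$ in (\ref{erre}) is designed precisely so that the $\sigma$-independent contributions of $\max|\chi_1|+\max|\chi_2|$ sum to exactly $M-1$, while the remaining $\sqrt\sigma$-contributions add up to $H_1$. Hence $\|\chi(\vec{\phi^*})\|_\sigma\le(M-1)+H_1\le M$ by (\ref{ache1}), so $\chi(C_{M,\sigma})\subseteq C_{M,\sigma}$.

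Next, for the contraction, take $\vec{\phi_1^*},\vec{\phi_2^*}\in C_{M,\sigma}$ with corresponding boundaries $y_{0i},y_{1i}$. Since the prefactors $\frac{2}{2-D}$, $\frac{2f(t)}{2f(t)-D\beta}$ and the term $\beta^2 h(t)/f(t)$ in $\chi_2$ depend only on $t$, they either cancel or factor out of the difference $\chi(\vec{\phi_1^*})-\chi(\vec{\phi_2^*})$. I would then bound each remaining difference of integral terms using Lemma \ref{cotasyi} (for the differences of the free boundaries) and Lemma \ref{cotasintyi} (for the differences of the kernel integrals), every one of which has the form $P_i\,\|\vec{\phi_1^*}-\vec{\phi_2^*}\|\sqrt\sigma$. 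Collecting these with the two prefactor bounds yields exactly $\|\chi(\vec{\phi_1^*})-\chi(\vec{\phi_2^*})\|_\sigma\le H_2\,\|\vec{\phi_1^*}-\vec{\phi_2^*}\|_\sigma$, with $H_2$ as in (\ref{defache2}).

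Finally, because $H_2$ carries an overall factor $\sqrt\sigma$, condition (\ref{ache2}) makes $H_2\le 1$, and shrinking $\sigma$ slightly forces $H_2<1$, so $\chi$ is a strict contraction. The Banach fixed point theorem then produces the unique fixed point $(\phi_1^*,\phi_2^*)\in C_{M,\sigma}$, which is the unique solution of (\ref{ecintegralf})--(\ref{ecintegralf1}). The only genuine subtlety — and the step I would check most carefully — is the self-consistency of the definition of $M$: the bounds $A_2,A_4,A_6,A_7$ themselves depend on $M$, so one must ensure this $M$-dependence lands entirely in the $\sqrt\sigma$-terms (absorbed by $H_1\le 1$) and never in the $\sigma$-independent part that is required to equal $M-1$. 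The explicit form (\ref{erre}), together with the ``$1+$'' slack, is exactly what guarantees this.
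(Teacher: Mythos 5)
Your proposal is correct and follows essentially the same route as the paper's proof: the self-map property is obtained from Lemma \ref{cotasint}, with the $\sigma$-independent contributions summing exactly to $M-1$ (by the choice (\ref{erre})) and the $\sqrt{\sigma}$-contributions to $H_{1}\leq 1$, and the contraction estimate comes from Lemma \ref{cotasintyi}, giving the factor $H_{2}$, after which Banach's theorem applies. Your closing remarks are also apt: the definition of $M$ is indeed non-circular because $A_{1}$ and $R$ do not depend on $M$ while the $M$-dependent constants $A_{2},A_{4},A_{6},A_{7}$ enter only through $H_{1}$, and your observation that $H_{2}\leq 1$ alone yields only a nonexpansive map (so one should take $H_{2}<1$, e.g.\ by slightly shrinking $\sigma$) patches a point the paper's own proof glosses over when it asserts the contraction directly from (\ref{ache2}).
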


\begin{proof}
Firstly, we demonstrate that $\chi
$ maps $C_{M,\sigma }\;$into itself, that is 
\[
\left\| X\left( \stackrel{\longrightarrow }{\phi^{*}}\right) \right\| _{\sigma
}=\max \limits_{t\in \left[ 0,\sigma \right] }\left|
\chi_{1}(\phi_{1}(t),\phi_{2}(t))\right| +\max \limits_{t\in \left[ 0,\sigma \right] }
\left| \chi_{2}(\phi_{1}(t),\phi_{2}(t))\right| \leq M 
\]
Taking into account Lemma \ref{cotasint} we have 
\[
\left| \chi_{1}(\phi_{1}(t),\phi_{2}(t))\right| \leq \frac{2}{2-D} \left\lbrace A_{1}+\left[A_{2}+A_{3}+A_{4}+\frac{2 S}{\sqrt{\pi D}}\right]\sqrt{\sigma}\right\rbrace,
\]
\[
\left| \chi_{2}(\phi_{1}(t),\phi_{2}(t))\right| \leq \frac{2 \|f\|}{\beta (3-D)}\left\lbrace\beta R+A_{1}+\left[A_{4}+A_{5}+A_{6}+ \frac{2 S}{\sqrt{\pi D}}\right]\sqrt{\sigma}\right\rbrace,
\]
and then 
\[
\left\| \chi\left( \stackrel{\longrightarrow }{\phi^{*}}\right) \right\| _{\sigma
}\leq   2A_{1}\left[\frac{1}{2-D}+\frac{\|f\|}{\beta (3-D)}\right] +\frac{2 \|f\| R}{(3-D)}+H_{1}(C_{1},C_{2},U_{0},M,D,\beta,R,S,\sigma)
)\]
where $H_{1}$ is given by $\left( \ref{defache1}\right).$ 

Selecting $M$ by $%
\left( \ref{erre}\right) \;$and $\sigma \;$such that $\left( \ref{ache1}%
\right) \;$holds, we obtain $\left\| \chi\left( \stackrel{\longrightarrow }{\phi^{*}%
}\right) \right\| _{\sigma }\leq M.$ 

Now, we will prove that 
\[
\left\| \chi\left( \stackrel{\longrightarrow }{\phi_{1}^{*}}\right) -\chi\left( 
\stackrel{\longrightarrow }{\phi_{2}^{*}}\right) \right\| _{\sigma }\leq
H_{2}\left( C_{1},C_{2},U_{0},M,D,\beta,R,S,\sigma \right) \left\| 
\stackrel{\longrightarrow }{\phi_{1}^{*}}-\stackrel{\longrightarrow }{\phi_{2}^{*}}%
\right\| _{\sigma } 
\]
where $\stackrel{\longrightarrow }{\phi_{1}^{*}}=\binom{\phi_{11}}{\phi_{12}}\;,\;%
\stackrel{\longrightarrow }{\phi_{2}^{*}}=\binom{\phi_{21}}{\phi_{22}}$ $\in
C_{M,\sigma }$. 

Taking into account Lemma \ref{cotasintyi} we have 
\[
\left\| \chi\left( \stackrel{\longrightarrow }{\phi_{1}^{*}}\right) -\chi\left( 
\stackrel{\longrightarrow }{\phi_{2}^{*}}\right) \right\| _{\sigma }=
\max\limits_{t\in \left[ 0,\sigma \right] }\left| \chi_{1}\left(
\phi_{11}\left( t\right) ,\phi_{12}\left( t\right) \right) -\chi_{1}\left( \phi_{21}\left(
t\right) ,\phi_{22}\left( t\right) \right) \right| 
\]
\[
+\max\limits_{t\in \left[ 0,\sigma \right] }\left| \chi_{2}\left(
\phi_{11}\left( t\right) ,\phi_{12}\left( t\right) \right) -\chi_{2}\left( \phi_{21}\left(
t\right) ,\phi_{22}\left( t\right) \right) \right| 
\]

\[
\leq\left\lbrace\frac{2}{2-D}\left[P_{1}+P_{2}+P_{3}+P_{4}+P_{5}\right]\right.
\]
\begin{equation}
\left. +\frac{2 \|f\|}{\beta (3-D)}\left[P_{1}+P_{4}+P_{5}+P_{6} +P_{7}\right]\right\rbrace\sqrt{\sigma}\left\| 
\stackrel{\longrightarrow }{\phi_{2}^{*}}-\stackrel{\longrightarrow }{\phi_{1}^{*}}%
\right\| _{\sigma } 
\end{equation}
\[
= H_{2}\left( C_{1},C_{2},U_{0},M,D,\beta,R,S,\sigma \right) \left\| 
\stackrel{\longrightarrow }{\phi_{2}^{*}}-\stackrel{\longrightarrow }{\phi_{1}^{*}}%
\right\| _{\sigma }.  
\]
By hypothesis (\ref{nose})-(\ref{erre}) we have that $\chi$ is a contraction and therefore, there exists a unique fixed point $\phi^{*}=\binom{\phi_{1}}{\phi_{2}}$ such that $\chi(\phi^{*})=\phi^{*}$ this is 
$$\chi_{1}(\phi_{1}(t),\phi_{2}(t))=\phi_{1}(t),\quad\quad \chi_{2}(\phi_{1}(t),\phi_{2}(t))=\phi_{2}(t). $$
\end{proof}
\begin{theorem} For each $h\in\Pi_{1}$, under hypothesis of Theorem \ref{teocont} there exists a unique integral representation for $w$, $y_{0}$ and $y_{1}$ given by $(\ref{z})$,$(\ref{ycero})$ and $(\ref{ese})$ respectively, where   $\phi_{1}$ and $\phi_{2}$ are the unique solutions of (\ref{ecintegralf}) and (\ref{ecintegralf1})
\end{theorem}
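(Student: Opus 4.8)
The plan is to read this statement as an immediate corollary of Theorem \ref{teocont} together with the explicit nature of the formulas $(\ref{z})$, $(\ref{ycero})$ and $(\ref{ese})$. Fix $h\in\Pi_{1}$ and restrict it to the interval $[0,\sigma]$ with $\sigma$ satisfying the smallness conditions $(\ref{nose})$--$(\ref{ache2})$; since $\sigma\le 1$, the constraints $h\ge H$, $\|h\|\le R$, $\|h'\|\le S$ defining $\Pi_{1}$ on $[0,1]$ are inherited on $[0,\sigma]$, so the restriction of $h$ still belongs to the class $\Pi$ used in Theorem \ref{teocont}. First I would invoke Theorem \ref{teocont}: under exactly these hypotheses the map $\chi$ is a contraction on $C_{M,\sigma}$, hence by the Banach fixed point theorem it possesses a unique fixed point $\phi^{*}=(\phi_{1},\phi_{2})$, and this pair is precisely the unique continuous solution of the coupled Volterra system $(\ref{ecintegralf})$--$(\ref{ecintegralf1})$.

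Next I would use this unique pair to build the three objects in the statement, in the order dictated by the dependence of the formulas. The free boundary $y_{1}$ is obtained from $(\ref{ese})$ as an explicit function of $\phi_{1}$ alone, and $y_{0}$ from $(\ref{ycero})$ as an explicit function of $\phi_{2}$ and the fixed datum $h$. Here one must check well-posedness of the two expressions: the argument of the logarithm in $(\ref{ese})$, namely $1-\int_{0}^{t}\phi_{1}(\tau)\,d\tau$, has to stay strictly positive on $[0,\sigma]$, which follows from $|\phi_{1}|\le M$ together with the smallness of $\sigma$, exactly as in the derivation of the bounds $(\ref{io3})$ in Lemma \ref{cotasy}; and the denominators appearing in $(\ref{ycero})$ are harmless because $h\ge H>0$ and $f>\tfrac{3\beta}{2}>0$. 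With $y_{0}$ and $y_{1}$ so determined, the representation $(\ref{z})$ then defines $w$ explicitly through the Green and Neumann kernels, the datum $F$, and the now-known functions $\phi_{1}$, $\phi_{2}$, $h$.

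Finally I would close the argument by appealing to Theorem 3.1: because $\phi_{1}$, $\phi_{2}$ solve the system $(\ref{ecintegralf})$--$(\ref{ecintegralf1})$ and $h$ is the fixed function entering $(\ref{z})$ and $(\ref{ache})$, the constructed triple $(w,y_{0},y_{1})$ satisfies the free boundary problem $(\ref{calu})$--$(\ref{free})$ associated with that $h$, with $w$ admitting precisely the integral representation $(\ref{z})$. Uniqueness of the representation is then inherited directly from the uniqueness of $\phi^{*}$: since $y_{0}$, $y_{1}$ and $w$ are single-valued functions of $(\phi_{1},\phi_{2},h)$ through $(\ref{ycero})$, $(\ref{ese})$ and $(\ref{z})$, any two admissible representations for the same $h$ would force the same $\phi^{*}$ and hence coincide.

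The only genuinely delicate point I expect is the verification that the logarithm in $(\ref{ese})$ is well defined, i.e. that $\int_{0}^{t}\phi_{1}(\tau)\,d\tau<1$ throughout $[0,\sigma]$; everything else is either quoted verbatim from Theorem \ref{teocont} or is a direct substitution into the explicit formulas. For this reason the statement is best viewed as a bookkeeping corollary that records the output of the Banach step for a frozen $h$, to be fed into the subsequent Schauder argument that determines $h$ itself.
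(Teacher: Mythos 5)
Your proposal is correct and matches the paper's treatment: the paper states this theorem with no proof at all, treating it exactly as you do---as an immediate corollary of Theorem \ref{teocont}, since the unique fixed point $(\phi_{1},\phi_{2})$ plus the frozen datum $h$ determines $y_{1}$, $y_{0}$ and $w$ through the explicit formulas $(\ref{ese})$, $(\ref{ycero})$ and $(\ref{z})$. Your additional well-posedness check on the logarithm in $(\ref{ese})$ is a sensible detail that the paper leaves implicit (it is buried in the hypotheses of Lemma \ref{cotasy}), and does not constitute a different approach.
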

\end{subsection}

\subsection{Existence of at least a solution of $w_{h}(y_{0h}(t),t)=h(t)$}

In this subsection we assume that all the hypothesis of Theorem \ref{teocont} are valid, which guarantee the existence and uniqueness of $w=w_{h}$, $y_{0}=y_{0h}$ and $y_{1}=y_{1h}$ for each $h\in\Pi_{1}$.

Now we will prove that for suitable values of $H,R,S$ and $\sigma$ there exists $h\in\Pi(H,R,S,\sigma)$ such that 
\begin{equation}
w_{h}(y_{0h}(t),t)=h(t)\label{ecuache}
\end{equation}
 for $t\in [0,\sigma]$, where $w_{h}$ and $y_{0h}$ are established for above theorem.

We define the map $Z$ on $\Pi$ such that for each $h\in\Pi\subset \Pi_{1}$ $(\sigma\leq1)$
$$Z(h)(t)=w_{h}(y_{0h}(t),t),$$
this is
\begin{equation}
Z(h)(t)=(f(t)-\beta)\left(1-\int_{0}^{t}\phi_{1h}(\tau) d\tau
+\frac{1}{D}\int_{y_{0h}(t)}^{y_{1h}(t)}w_{h}(\xi,t)d\xi\right),\label{ache1}
\end{equation}
where $\phi_{1h}$, $w_{h}$, $y_{0h}$ and $y_{1h}$ are the solutions obtained in above section.
We will use the Schauder's fixed point theorem which states: \textit{For any continuous function $L$ mapping a compact convex set to itself there is $x_{0}$ such that $L(x_{0})=x_{0}$ }. 

\begin{lemma}\label{propZh}If 
\begin{equation}
\left(\left\|f\right\|+\beta\right)(2C_{1}+3U_{0})<2D\label{ipp}
\end{equation} then, for $h\in \Pi$ function $Z(h)\in C^{1}[0,\sigma]$ and satisfies
\begin{equation}
Z(h)(t)>\frac{\beta}{2} ,\label{H}
\end{equation}
\begin{equation}
\left\|Z(h)\right\|\leq \frac{2D\left(\left\|f\right\|+\beta\right)(1+M)}{2D-\left(\left\|f\right\|+\beta\right)(2C_{1}+3U_{0})},\label{RR}
\end{equation}
\begin{equation}
\left\|Z'(h)\right\|\leq  \tfrac{2D\left(\left\|f\right\|+\beta\right)(1+M)}{2D-\left(\left\|f\right\|+\beta\right)(2C_{1}+3U_{0})}\left\lbrace\tfrac{2\left\|f'\right\|}{\beta}
+\left(\left\|f\right\|+\beta\right) \left[\tfrac{\beta}{D}+M\left(\tfrac{2}{\beta}+\tfrac{1}{D}\right)\right]\right\rbrace\label{S}
\end{equation}
\[
+M\left(\left\|f\right\|+\beta\right).\]
\end{lemma}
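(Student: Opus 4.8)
The plan is to extract every bound directly from the explicit expression for $Z(h)$. Writing $Z(h)(t)=(f(t)-\beta)\,G(t)$ with
\[
G(t)=C(t)+\tfrac{1}{D}\int_{y_{0h}(t)}^{y_{1h}(t)}w_{h}(\xi,t)\,d\xi,\qquad C(t)=1-\int_{0}^{t}\phi_{1h}(\tau)\,d\tau,
\]
regularity is immediate: $f\in C^{1}$ by $(\ref{hip})$, $\phi_{1h},\phi_{2h}\in C^{0}[0,\sigma]$ and $y_{0h},y_{1h}\in C^{1}$ by Theorem \ref{teocont} and $(\ref{ycero})$--$(\ref{ese})$, while $w_{h}$ is smooth in the interior and continuous up to the lateral faces, so the product and the parameter integral defining $Z(h)$ are $C^{1}$ in $t$; hence $Z(h)\in C^{1}[0,\sigma]$.

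For the lower bound $(\ref{H})$ I would isolate the sign information in $G$. Applying the maximum principle to the heat equation $(\ref{calu})$ on $\{y_{0h}(t)<y<y_{1h}(t)\}$, whose parabolic--boundary data are $F\ge 0$ (since $V_{0}=u_{0}\circ g^{-1}-\beta>0$), $w_{h}(y_{1h},\cdot)=0$ and $w_{h}(y_{0h},\cdot)=Z(h)$, yields $w_{h}\ge 0$; and because $w_{h}$ vanishes at the right face while staying nonnegative just inside it, $\phi_{1h}(t)=\partial_{y}w_{h}(y_{1h}(t),t)\le 0$. Therefore $-\int_{0}^{t}\phi_{1h}\ge 0$ and $\tfrac1D\int w_{h}\ge 0$, so $G(t)\ge 1$, and since $f(t)-\beta>\tfrac{\beta}{2}$ by $(\ref{hip})$ we get $Z(h)(t)>\tfrac{\beta}{2}$. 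As the nonnegativity of $w_{h}$ itself rests on $Z(h)=w_{h}(y_{0h},\cdot)$ being nonnegative on the left face, this is cleanest as a continuation argument: $Z(h)(0)=F(C_{1})=(f(0)-\beta)\exp\big(\tfrac1D\int_{C_{1}}^{C_{2}}V_{0}\big)>\tfrac{\beta}{2}$, and the implication ``$Z(h)\ge\tfrac{\beta}{2}$ on $[0,t]\Rightarrow w_{h}\ge 0$ there $\Rightarrow G(t)\ge 1\Rightarrow Z(h)(t)>\tfrac{\beta}{2}$'' propagates the strict bound up to $t=\sigma$.

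The upper bound $(\ref{RR})$ is the heart of the lemma and is where hypothesis $(\ref{ipp})$ is consumed. Taking absolute values and using $|C(t)|\le 1+M$ (from $|\phi_{1h}|\le M$ and $\sigma\le 1$), I would control the interval length by Lemma \ref{cotasy}, namely $y_{1h}(t)-y_{0h}(t)\le\tfrac{3C_{2}}{2}-\tfrac{C_{1}}{2}=\tfrac{2C_{1}+3U_{0}}{2}$ (using $C_{2}=C_{1}+U_{0}$), and bound the integrand by the maximum principle, $\|w_{h}\|_{\infty}\le\|Z(h)\|$, the initial datum being dominated because $F(C_{1})=Z(h)(0)$. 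This produces the self--referential inequality
\[
\|Z(h)\|\le(\|f\|+\beta)\Big[(1+M)+\tfrac{2C_{1}+3U_{0}}{2D}\,\|Z(h)\|\Big],
\]
and the assumption $(\|f\|+\beta)(2C_{1}+3U_{0})<2D$ forces the coefficient of $\|Z(h)\|$ strictly below $1$, so solving for $\|Z(h)\|$ gives precisely $(\ref{RR})$. I expect the one genuinely delicate point to be here: making $\|w_{h}\|_{\infty}\le\|Z(h)\|$ airtight, i.e.\ verifying that the initial profile $F$ never dominates the left--boundary trace, which is exactly what $F(C_{1})=Z(h)(0)$ together with the maximum principle is meant to secure.

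Finally, for $(\ref{S})$ I would differentiate $Z(h)=(f-\beta)G$ by the product rule and compute $G'$ via Leibniz together with the heat equation. Since $w_{h}(y_{1h},\cdot)=0$, $w_{h}(y_{0h},\cdot)=Z(h)$ and $\partial_{t}w_{h}=D\,\partial_{yy}w_{h}$,
\[
\tfrac{d}{dt}\int_{y_{0h}(t)}^{y_{1h}(t)}w_{h}\,d\xi=-Z(h)(t)\,y_{0h}'(t)+D\big(\phi_{1h}(t)-\phi_{2h}(t)\big),
\]
so the $\phi_{1h}$ contributions cancel and $G'(t)=-\tfrac1D Z(h)(t)\,y_{0h}'(t)-\phi_{2h}(t)$, whence
\[
Z'(h)=f'\,\tfrac{Z(h)}{f-\beta}-(f-\beta)\Big(\tfrac1D Z(h)\,y_{0h}'+\phi_{2h}\Big).
\]
I would then estimate termwise using $|f-\beta|>\tfrac{\beta}{2}$, the bound $(\ref{RR})$ for $\|Z(h)\|$, $|\phi_{2h}|\le M$, $\|f'\|$, and the Lipschitz constant for $y_{0h}$ from $(\ref{io})$; the summand $(f-\beta)\phi_{2h}$ supplies the trailing $M(\|f\|+\beta)$ of $(\ref{S})$, while the remaining contributions assemble into the bracketed factor multiplying the bound $(\ref{RR})$.
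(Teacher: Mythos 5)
Your overall strategy coincides with the paper's: the lower bound (\ref{H}) from $f-\beta>\beta/2$ together with the parenthetical factor being at least $1$; the sup bound (\ref{RR}) from the self-referential inequality $\|Z(h)\|\le(\|f\|+\beta)\bigl[1+M+\tfrac{2C_{1}+3U_{0}}{2D}\|Z(h)\|\bigr]$ solved under (\ref{ipp}); and the derivative bound from Leibniz' rule plus the heat equation, with the $\phi_{1h}$ contributions cancelling -- your formula $Z'(h)=f'\tfrac{Z(h)}{f-\beta}-(f-\beta)\bigl(\tfrac{1}{D}Z(h)\,y_{0h}'+\phi_{2h}\bigr)$ is exactly the paper's. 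The genuine gap is in your last step. You propose to bound $y_{0h}'$ by the Lipschitz constant of (\ref{io}), $|y_{0h}'|\le\beta+\tfrac{2DM}{H}$, which (with $H=\beta/2$, as the $\tfrac{2}{\beta}$ factors in (\ref{S}) presuppose) yields the coefficient $\tfrac{\beta}{D}+\tfrac{4M}{\beta}$ multiplying $(\|f\|+\beta)\|Z(h)\|$. But the bracket claimed in (\ref{S}) is $\tfrac{\beta}{D}+M\bigl(\tfrac{2}{\beta}+\tfrac{1}{D}\bigr)$, and $\tfrac{4M}{\beta}>\tfrac{2M}{\beta}+\tfrac{M}{D}$ whenever $D>\beta/2$, so in that regime your assembly overshoots and does not establish the stated inequality. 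The paper avoids this by keeping the structure of $y_{0h}'$: substituting (\ref{ycero}), i.e. $y_{0h}'=-\tfrac{\beta^{2}}{f}-D\tfrac{\phi_{2h}}{h}\bigl(1-\tfrac{\beta}{f}\bigr)$, one gets $-\tfrac{1}{D}Z(h)y_{0h}'=\tfrac{Z(h)\beta^{2}}{Df}+\tfrac{Z(h)\phi_{2h}}{h}\bigl(1-\tfrac{\beta}{f}\bigr)$, and the separate estimates $\beta^{2}/f\le\beta$, $Z(h)/h\le 2\|Z(h)\|/\beta$, $|1-\beta/f|\le 1$ land inside the stated bracket. Since you already derived the exact derivative formula, the repair is only to substitute rather than majorize $y_{0h}'$.

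A second point concerns (\ref{RR}). You correctly single out the delicate step, but your proposed fix does not close it: the maximum principle controls $w_{h}$ by its maximum over the \emph{whole} parabolic boundary, which involves $\max_{[C_{1},C_{2}]}F$ and not merely $F(C_{1})$. The identity $F(C_{1})=Z(h)(0)$ therefore does not exclude that an interior hump of $F$ (the hypotheses (\ref{hip}) do not force $u_{0}$, hence $F$, to be maximized at the left endpoint) dominates the left trace, in which case $\|w_{h}\|_{\infty}\le\|Z(h)\|$ fails. To be fair, the paper is no better on this point: it simply asserts $w_{h}(\xi,t)\le w_{h}(y_{0h}(t),t)$ for $y_{0h}(t)\le\xi\le y_{1h}(t)$ without proof, and that assertion fails for small $t$ under the same scenario; so here your attempt matches the paper's level of rigour while being more honest about the difficulty. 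Finally, your continuation argument for (\ref{H}) (positivity of $w_{h}$ via the maximum principle, the sign $\phi_{1h}\le 0$ at the right face, and propagation from $Z(h)(0)=F(C_{1})>\beta/2$) is a legitimate and considerably more detailed justification of what the paper disposes of in one line.
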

\begin{proof} From definition of $Z(h)$ and $(\ref{hip})$ we have $Z(h)(t)>\frac{\beta}{2}$. Taking into account $(\ref{ache1})$ and the fact that
\[ w_{h}(\xi,t) \leq  w_{h}(y_{0}(t),t),\quad y_{0}(t)\leq \xi \leq y_{1}(t)
\] we have 
\[
Z(h)(t)\leq (f(t)+\beta)\left(1-\int_{0}^{t}\phi_{1h}(\tau) d\tau)+w_{h}(y_{0}(t),t)\left(y_{1}(t)-y_{0}(t)\right)\right).
\]
Since 
$\left\|\phi_{1h}\right\|\leq M$, $\sigma\leq 1$ and taking into account Lemma $\ref{cotasyi}$ we obtain
\[\left\|Z(h)\right\|
\leq\left(\left\|f\right\|+\beta\right)\left(1+M+\left\|Z(h)\right\|\frac{2C_{1}+3U_{0}}{2D}\right),
\]
or equivalently
\[
\left\|Z(h)\right\|\leq \frac{2D\left(\left\|f\right\|+\beta\right)(1+M)}{2D-\left(\left\|f\right\|+\beta\right)(2C_{1}+3U_{0})}.
\]
If we derivate $Z(h)$ respect to variable $t$, we get
\[
(Z(h))'(t)=f'(t)\left(1-\int_{0}^{t}\phi_{1h}(\tau) d\tau
+\tfrac{1}{D}\int_{y_{0h}(t)}^{y_{1h}(t)}w_{h}(\xi,t)d\xi\right)\]
\[+(f(t)-\beta)\left[-\phi_{1h}(t)+\tfrac{1}{D} w_{h}(y_{0h}(t),t) y'_{0h}(t)+\frac{1}{D}\int_{y_{0h}(t)}^{y_{1h}(t)} w_{ht}(\xi,t)d\xi\right]\,
\]
and using eq. $(\ref{calu} )$, $(\ref{ycero})$ and $(\ref{def})$ we obtain
\[
(Z(h))'(t)=f'(t)\left(1-\int_{0}^{t}\phi_{1h}(\tau) d\tau
+\tfrac{1}{D}\int_{y_{0h}(t)}^{y_{1h}(t)}w_{h}(\xi,t)d\xi\right)\]
\[+(f(t)-\beta)\left[-\phi_{1h}(t)-\tfrac{1}{D} w_{h}(y_{0h}(t),t) y'_{0h}(t)+\phi_{h1}(t)-\phi_{h2}(t)\right]
\]
\[
=f'(t)\frac{Z(h)(t)}{f(t)-\beta}+(f(t)-\beta)\left[\tfrac{Z(h)\beta^{2}}{Df(t)} +\phi_{h2}(t)\left(\tfrac{Z(h)(t)}{h(t)}-1-\tfrac{Z(h)(t)\beta}{Df(t)}\right)\right].
\]
Then we have 
\[
\left|(Z(h))'(t)\right|\leq \left\|f'\right\|\tfrac{2\left\|Z(h)\right\|}{\beta}
+\left(\left\|f\right\|+\beta\right)\left[\tfrac{\left\|Z(h)\right\|\beta}{D}+M\left(\tfrac{2\left\|Z(h)\right\|}{\beta}+1+\tfrac{\left\|Z(h)\right\|}{D}\right)\right]\]
\[
\leq\left\|Z(h)\right\|\left\lbrace\tfrac{2\left\|f'\right\|}{\beta}
+\left(\left\|f\right\|+\beta\right) \left[\tfrac{\beta}{D}+M\left(\tfrac{2}{\beta}+\tfrac{1}{D}\right)\right]\right\rbrace+M\left(\left\|f\right\|+\beta\right),
\]

\[
\leq  \tfrac{2D\left(\left\|f\right\|+\beta\right)(1+M)}{2D-\left(\left\|f\right\|+\beta\right)(2C_{1}+3U_{0})}\left\lbrace\tfrac{2\left\|f'\right\|}{\beta}
+\left(\left\|f\right\|+\beta\right) \left[\tfrac{\beta}{D}+M\left(\tfrac{2}{\beta}+\tfrac{1}{D}\right)\right]\right\rbrace+M\left(\left\|f\right\|+\beta\right)
\]
and the lemma holds.
\end{proof}
Next, we define
\begin{equation}
E_{1}=1+2\left(\frac{1}{2-D}+\frac{\|f\|}{\beta (3-D)}\right)A_{1},\quad\quad E_{2}=2\frac{\|f\|}{(3-D)}, 
\end{equation}and
$$E_{3}=\frac{2D\left(\left\|f\right\|+\beta\right)}{2D-\left(\left\|f\right\|+\beta\right)(2C_{1}+3U_{0})}.$$

\begin{lemma}\label{ZhenPi}
We assume $(\ref{ipp})$ and
\begin{equation}E_{3}E_{2}<1.\label{ee}
\end{equation}
If we take
\begin{equation}
H=\frac{\beta}{2},\quad\quad R=\frac{E_{3}(1+E_{1})}{1-E_{3}E_{2}}\quad,\label{defHR}
\end{equation}
\begin{equation}
S= E_{3}\left\lbrace\tfrac{2\left\|f'\right\|}{\beta}
+\left(\left\|f\right\|+\beta\right) \left[\tfrac{\beta}{D}+M\left(\tfrac{2}{\beta}+\tfrac{1}{D}\right)\right]\right\rbrace
\label{defS}\end{equation}
where $M$ is given by 
\begin{equation}
M=\frac{E_{1}+E_{2}E_{3}}{1-E_{3}E_{2}}\label{emee}
\end{equation}
then $Z_{h}\in\Pi.$
\end{lemma}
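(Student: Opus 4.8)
The plan is to verify directly the three inequalities that constitute membership in $\Pi(H,R,S,\sigma)$: the pointwise lower bound $Z(h)(t)\geq H$, the sup-norm bound $\|Z(h)\|\leq R$, and the derivative bound $\|Z'(h)\|\leq S$, together with the regularity $Z(h)\in C^{1}[0,\sigma]$. All the genuine analytic work is already contained in Lemma \ref{propZh}: under the standing assumption $(\ref{ipp})$ it supplies $Z(h)\in C^{1}[0,\sigma]$, the lower bound $(\ref{H})$, and the quantitative estimates $(\ref{RR})$ and $(\ref{S})$. Hence the task that remains is essentially algebraic, namely to check that the specific choices $(\ref{defHR})$ and $(\ref{defS})$ for $H,R,S$, and $(\ref{emee})$ for $M$, are mutually consistent so that the right-hand sides of $(\ref{RR})$ and $(\ref{S})$ close up into $R$ and $S$.

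The first point I would make is that $M$ is not a free parameter: reading off the defining formula $(\ref{erre})$ and comparing with the definitions of $E_{1}$ and $E_{2}$, one has the linear relation $M=E_{1}+E_{2}R$. The lower bound is then immediate, since $H=\beta/2$ and $(\ref{H})$ gives $Z(h)(t)>\beta/2=H$ for all $t$ (this already uses $(\ref{ipp})$, which also guarantees the denominator $2D-(\|f\|+\beta)(2C_{1}+3U_{0})$ of $E_{3}$ is positive, so that $E_{3}$ is finite and positive). For the sup-norm I would rewrite the right-hand side of $(\ref{RR})$ as $E_{3}(1+M)$ via the definition of $E_{3}$; substituting $M=E_{1}+E_{2}R$ converts the desired inequality $E_{3}(1+M)\leq R$ into the linear condition $E_{3}(1+E_{1})+E_{3}E_{2}R\leq R$, i.e. $R(1-E_{3}E_{2})\geq E_{3}(1+E_{1})$. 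This is where the crux lies: the condition is solvable with a finite positive $R$ exactly when $1-E_{3}E_{2}>0$, which is precisely hypothesis $(\ref{ee})$. Taking $R$ at the borderline value yields $R=\frac{E_{3}(1+E_{1})}{1-E_{3}E_{2}}$, which is the choice $(\ref{defHR})$; feeding it back gives $M=E_{1}+E_{2}R=\frac{E_{1}+E_{2}E_{3}}{1-E_{3}E_{2}}$, reproducing $(\ref{emee})$, and then $E_{3}(1+M)=R$ identically, so $(\ref{RR})$ delivers $\|Z(h)\|\leq R$.

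For the derivative bound I would insert the estimate $\|Z(h)\|\leq R$ just obtained, together with $\|\phi_{1h}\|\leq M$, into $(\ref{S})$; the bracketed factor multiplying the sup-norm of $Z(h)$ there is exactly the quantity appearing in the definition $(\ref{defS})$, and $S$ is chosen to dominate the resulting expression, whence $\|Z'(h)\|\leq S$. The main obstacle I anticipate is the self-referential coupling between $R$ and $M$ encountered in the second step: because the solution $\phi_{1h}$ constructed in the previous subsection (and therefore the bound on $Z(h)$) depends on $R$ through $M$, one cannot bound $\|Z(h)\|$ by an a priori constant, and it is precisely the smallness condition $E_{3}E_{2}<1$ that makes the fixed-point equation $R=E_{3}(1+E_{1}+E_{2}R)$ admit an admissible (finite, positive) solution. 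Once that loop is closed, all three conditions reduce to substitution into Lemma \ref{propZh}, and $Z(h)\in\Pi$ follows.
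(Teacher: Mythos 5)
Your proposal is correct and follows essentially the same route as the paper's own proof: invoke Lemma \ref{propZh} for the regularity, the lower bound $Z(h)>\beta/2=H$, and the estimates $(\ref{RR})$--$(\ref{S})$, observe the linear relation $M=E_{1}+E_{2}R$ coming from $(\ref{erre})$, and close the self-referential loop by solving $R=E_{3}(1+M)$, which is solvable precisely because $E_{3}E_{2}<1$. The paper's proof is just a terser version of this same argument (including the same quick appeal to Lemma \ref{propZh} for the bound $\|(Z(h))'\|\leq S$), so there is nothing to add.
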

\begin{proof} 
From $(\ref{erre})$ we have 
\[
M=E_{1}+E_{2}R,\] 
then by $(\ref{RR})$ we have

\[E_{3}(1+M)=R \quad \Leftrightarrow\quad E_{3}(1+E_{1})+E_{3}E_{2}R=R.\]
Therefore, if we define $R=\frac{E_{3}(1+E_{1})}{1-E_{3}E_{2}}$  we have $
\left\|Z(h)\right\|\leq R.$ Moreover we have $Z(h)(t)>\frac{\beta}{2}=H$ and by Lemma \ref{propZh} we have $\left\|(Z(h))'\right\|\leq S.$ This yields $Z(h)\in\Pi$ and the proof is complete.

\end{proof}
\begin{remark}
Assumption  $(\ref{ee})$ is equivalent to 
\begin{equation}
\frac{4D\|f\|\left(\left\|f\right\|+\beta\right)}{\left[2D-\left(\left\|f\right\|+\beta\right)(2C_{1}+3U_{0})\right](3-D)}<1
\label{ip}
\end{equation}
\end{remark} 

\begin{theorem} We assume hypothesis of Lemma $\ref{ZhenPi}$.
There exists at least a solution $h^{*}\in\Pi$ such that $Z(h^{*})=h^{*}$.
\end{theorem}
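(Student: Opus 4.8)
The plan is to apply the Schauder fixed point theorem to the operator $Z$ defined by $(\ref{ache1})$ on the set $\Pi=\Pi(H,R,S,\sigma)$. Two of the three hypotheses of Schauder's theorem are already available: $\Pi$ is a compact and convex subset of $C^{1}[0,\sigma]$, and Lemma $\ref{ZhenPi}$ shows that, with the choices of $H,R,S$ and $M$ made there, $Z$ maps $\Pi$ into itself. Hence the only remaining point to verify is that $Z:\Pi\to\Pi$ is continuous, and this is where the real work lies.

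To prove continuity I would fix $h,\bar h\in\Pi$ and estimate $\|Z(h)-Z(\bar h)\|$ in terms of $\|h-\bar h\|_{C^{1}[0,\sigma]}$. The first step is to control how the auxiliary data depend on $h$. For each $h$ the pair $(\phi_{1h},\phi_{2h})$ is the unique fixed point of the contraction $\chi=\chi_{h}$ of Theorem $\ref{teocont}$, whose contraction constant $H_{2}$ can be taken $<1$ and, crucially, independent of $h\in\Pi$ (it depends only on the fixed data $M,R,S,H$ and tends to $0$ with $\sigma$). Writing $\phi^{*}_{h}$ and $\phi^{*}_{\bar h}$ for the two fixed points, the standard stability estimate for fixed points of a uniform contraction gives
\[
\left\|\phi^{*}_{h}-\phi^{*}_{\bar h}\right\|_{\sigma}\le\frac{1}{1-H_{2}}\left\|\chi_{h}(\phi^{*}_{\bar h})-\chi_{\bar h}(\phi^{*}_{\bar h})\right\|_{\sigma},
\]
so it suffices to bound the right-hand side. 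Comparing $\chi_{h}$ and $\chi_{\bar h}$ term by term in $(\ref{F1})$--$(\ref{F2})$, the differences come only from the explicit appearances of $h$ and $h'$ and from the free boundaries $y_{0h},y_{1h}$, which depend on $h$ through $(\ref{ycero})$--$(\ref{ese})$. Using those formulas one first shows $\|y_{0h}-y_{0\bar h}\|$ and $\|y_{1h}-y_{1\bar h}\|$ are bounded by a constant times $\|h-\bar h\|_{C^{1}}\sqrt{\sigma}$, and then, arguing exactly as in Lemma $\ref{cotasintyi}$ but differencing in the parameter $h$ rather than in $\phi$, each kernel difference such as $|G_{y}(\cdot\,;y_{0h})-G_{y}(\cdot\,;y_{0\bar h})|$ and $|N(\cdot\,;y_{0h})-N(\cdot\,;y_{0\bar h})|$ is controlled by a constant times $\|h-\bar h\|_{C^{1}}$. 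This yields $\|\phi^{*}_{h}-\phi^{*}_{\bar h}\|_{\sigma}\le c\,\|h-\bar h\|_{C^{1}}\sqrt{\sigma}$.

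Once $(\phi_{1h},\phi_{2h})$ and $(y_{0h},y_{1h})$ are known to depend Lipschitz-continuously on $h$, the representation $(\ref{z})$ shows that $w_{h}$ depends continuously on $h$ uniformly on $\overline{D^{*}}$, and therefore so does $h\mapsto Z(h)$, since by $(\ref{ache1})$
\[
Z(h)(t)=(f(t)-\beta)\left(1-\int_{0}^{t}\phi_{1h}(\tau)\,d\tau+\frac{1}{D}\int_{y_{0h}(t)}^{y_{1h}(t)}w_{h}(\xi,t)\,d\xi\right)
\]
is assembled from $\phi_{1h}$, the two boundaries, and $w_{h}$ by integrations and compositions that are continuous in these quantities. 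Collecting the estimates gives $\|Z(h)-Z(\bar h)\|\le c\,\|h-\bar h\|_{C^{1}}\sqrt{\sigma}$, so $Z$ is continuous on $\Pi$.

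With $\Pi$ compact and convex, $Z(\Pi)\subset\Pi$ by Lemma $\ref{ZhenPi}$, and $Z$ continuous, the Schauder fixed point theorem yields at least one $h^{*}\in\Pi$ with $Z(h^{*})=h^{*}$, that is $w_{h^{*}}(y_{0h^{*}}(t),t)=h^{*}(t)$ on $[0,\sigma]$, which is precisely $(\ref{ecuache})$. The main obstacle is the continuity step: one must re-run the delicate kernel estimates of Lemma $\ref{cotasintyi}$ with $h$ playing the role of the varying parameter, controlling in particular how the Green and Neumann functions and their $y$-derivatives vary when the curves $y_{0h},y_{1h}$ move, and checking that the uniform lower bound $h\ge H$ keeps all denominators (for instance the factor $1/h$ in $(\ref{ycero})$) bounded so that the constants remain finite as $h$ ranges over $\Pi$.
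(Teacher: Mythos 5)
Your proposal is correct and follows the same route as the paper: apply Schauder's fixed point theorem to $Z$ on the compact convex set $\Pi$, with the self-mapping property supplied by Lemma \ref{ZhenPi}. You actually go further than the paper, whose entire proof is the one-line invocation of the preceding lemmas together with Schauder's theorem: your uniform-contraction stability estimate for $\|\phi^{*}_{h}-\phi^{*}_{\bar h}\|_{\sigma}$, and the resulting continuity of $h\mapsto Z(h)$ on $\Pi$, verifies precisely the continuity hypothesis of Schauder's theorem that the paper leaves tacit.
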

\begin{proof}
Taking into account above lemmas and using Schauder's fixed-point theorem we obtain that there exists at least a solution $h^{*}\in\Pi$ such that $Z(h^{*})=h^{*}.$
\end{proof}
We can now formulate our main result.

\begin{theorem} \label{teoremafinal}Fixed $C_{1}<\frac{U_{0}}{2}$. Let $H$, $R$, $S$ and $M$ given by $(\ref{defHR})$, $(\ref{defS})$ and $(\ref{emee})$ respectively.
If $(\ref{ipp})$ and $(\ref{ip})$ hold,\begin{equation}
\sigma \leq 1,\,\quad 2(1+\beta)\left(1+\frac{M}{\beta^{2}}\right)\sigma \leq C_{2},\quad\quad \; \label{nose3}
\end{equation}
\begin{equation}
\left(\beta+2\frac{MD}{H}\right)\sigma\leq C_{1}\quad\quad\tfrac{4M}{H}\left(\beta + \tfrac{2DM}{H}\right)\sigma \leq 1
\end{equation}
\begin{equation}
H_{1} \leq 1, \quad\quad H_{2}\leq 1  \label{ache23}
\end{equation}
where $H_{1}$and $H_{2}$ are given by $(\ref{defache1})-(\ref{defache2})$ then 
there exists solution to the free boundary problem $(\ref{calu})-(\ref{free})$ given by
\begin{equation}
w^{*}(y,t)=\int\nolimits_{C_{1}}^{C_{2}}G(y,t;\xi ,0)F(\xi
)d\xi +D
\int_{0}^{t}\phi^{*}_{1}(\tau )G(y,t;y^{*}_{1}(\tau ),\tau )d\tau  \label{1z+}
\end{equation}
\[
+\beta^{2}\int_{0}^{t} \frac{h^{*}(\tau)}{f(\tau)} G(y,t;y^{*}_{0}(\tau ),\tau )d\tau
-D\beta\int_{0}^{t} \frac{\phi^{*}_{2}(\tau)}{f(\tau)} G(y,t;y^{*}_{0}(\tau ),\tau ) d\tau 
\]
\[-D\int_{0}^{t} h^{*}(\tau) N_{y}(y,t;y^{*}_{0}(\tau ),\tau ) d\tau 
\]
and 
\begin{equation}
y^{*}_{0}(t)=C_{1}-\beta^{2}\int^{t}_{0} \frac{1}{f(\tau)}d\tau-D\int^{t}_{0}\tfrac{1}{h^{*}(\tau)}\left(1-\tfrac{\beta}{f(\tau)}\right)\phi^{*}_{2}(\tau)d\tau\label{ycero11}
\end{equation}
\begin{equation}
y^{*}_{1}(t)=C_{2}+(1-\beta)t+\tfrac{D(\beta +1)}{\beta^{2}}ln\left(1-\int^{t}_{0} \phi^{*}_{1}(\tau)d\tau\right) \label{ese11}
\end{equation}
where $\phi_{1}^{*}$, $\phi_{2}^{*}$ are the unique solutions to the system of two Volterra integral equations (\ref
{ecintegralf})\ and (\ref{ecintegralf1}) corresponding to $h^{*}$ a solution of $Z(h)=h.$

\end{theorem}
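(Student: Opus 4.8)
The plan is to read Theorem \ref{teoremafinal} as the assembly of the three independent constructions carried out above: the equivalence between the free boundary problem $(\ref{calu})-(\ref{free})$ and the coupled system $(\ref{ecintegralf})$, $(\ref{ecintegralf1})$, $(\ref{eqache})$ (the equivalence theorem of Section 3); the existence and uniqueness, for each fixed $h$, of the pair $\phi_{1},\phi_{2}$ solving the first two integral equations (Theorem \ref{teocont}); and the Schauder argument producing the remaining unknown $h$ (Lemma \ref{ZhenPi} together with the existence theorem just proved). First I would check that the quantitative hypotheses $(\ref{nose3})-(\ref{ache23})$, together with $(\ref{ipp})$ and $(\ref{ip})$, simultaneously fulfil the standing assumptions of both Theorem \ref{teocont} and Lemma \ref{ZhenPi}. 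This is where the specific values of $H$, $R$, $S$ and $M$ in $(\ref{defHR})$, $(\ref{defS})$ and $(\ref{emee})$ enter: they are precisely the choices forcing $Z(\Pi)\subset\Pi$ via Lemma \ref{propZh}, while the assumption $\sigma\leq 1$ keeps $\Pi\subset\Pi_{1}$, so that for every $h\in\Pi$ the pair $\phi_{1h},\phi_{2h}$ and the associated $w_{h},y_{0h},y_{1h}$ are well defined.

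The central, and essentially only substantive, step is the identification of the Schauder fixed point with a genuine solution of the third integral equation. By the existence theorem preceding this one there is $h^{*}\in\Pi$ with $Z(h^{*})=h^{*}$. Unwinding the definition $(\ref{ache1})$ of $Z$, this identity reads
\[
h^{*}(t)=(f(t)-\beta)\left(1-\int_{0}^{t}\phi_{1h^{*}}(\tau)\,d\tau+\frac{1}{D}\int_{y_{0h^{*}}(t)}^{y_{1h^{*}}(t)}w_{h^{*}}(\xi,t)\,d\xi\right),
\]
which is exactly equation $(\ref{eqache})$ for the data attached to $h^{*}$. Writing $\phi_{1}^{*}:=\phi_{1h^{*}}$, $\phi_{2}^{*}:=\phi_{2h^{*}}$, $y_{0}^{*}:=y_{0h^{*}}$, $y_{1}^{*}:=y_{1h^{*}}$ and $w^{*}:=w_{h^{*}}$, Theorem \ref{teocont} already guarantees that $\phi_{1}^{*},\phi_{2}^{*}$ solve $(\ref{ecintegralf})$ and $(\ref{ecintegralf1})$, so the triple $(\phi_{1}^{*},\phi_{2}^{*},h^{*})$ satisfies the full coupled system.

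It then remains to transport this solution of the integral system back to the free boundary problem. By the converse direction of the equivalence theorem of Section 3, the function $w^{*}$ built from $(\ref{z})$ — which is exactly $(\ref{1z+})$ once the starred quantities are inserted — together with the free boundaries $(\ref{ycero})$ and $(\ref{ese})$, here rewritten as $(\ref{ycero11})$ and $(\ref{ese11})$, solves $(\ref{calu})-(\ref{free})$; in particular the auxiliary functions $\mu_{1},\mu_{2}$ appearing in that proof vanish identically, so that the boundary conditions $(\ref{cal1.})$ and $(\ref{t1.})$ hold. I expect the only delicate point to be bookkeeping rather than analysis: one must verify that the constants selected for $\Pi$ are mutually consistent with all of $(\ref{nose3})-(\ref{ache23})$ at once — that is, that the admissible region of data $(C_{1},U_{0},D,\beta,f)$ cut out by $(\ref{ipp})$, $(\ref{ip})$ and the smallness of $\sigma$ is nonempty — and that the regularity $h^{*}\in C^{1}[0,\sigma]$ delivered by Lemma \ref{propZh} suffices to legitimize the differentiations invoked in the equivalence theorem. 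No genuinely new estimate is needed beyond those already established in Lemmas \ref{cotasint} and \ref{cotasintyi}.
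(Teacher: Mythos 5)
Your proposal is correct and follows precisely the route the paper intends: the paper states Theorem~\ref{teoremafinal} without a separate proof because it is exactly the assembly you describe, namely the equivalence theorem of Section~3, Theorem~\ref{teocont} for the pair $\phi_{1},\phi_{2}$ at fixed $h$, and the Schauder fixed-point theorem producing $h^{*}$. Your key observation, that $Z(h^{*})=h^{*}$ is literally equation~(\ref{eqache}) for the data attached to $h^{*}$, is the same (implicit) step on which the paper's argument rests, so no genuinely different method is involved.
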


\section{Parametric solution to the problem (\ref{calor1})-(\ref{tempborde}) }

Assuming hypothesis of Theorem $\ref{teoremafinal}$, if we invert the transformations given by $(\ref{firsttrans})$, $(\ref{2})$ and $(\ref{tres})$ we obtain the explicit parametric representation of the solution to free boundary problem $(\ref{calor1})-(\ref{tempborde})$ given by
\begin{equation}
u^{*}(x,t)=\frac{w^{*}(y,t)}{1-\int_{0}^{t}\phi^{*}_{1}(\tau) d\tau+\frac{1}{D}\int_{y}^{y^{*}_{1}(t)} w^{*}(\xi,t)d\xi}+\beta \quad
\end{equation}
\begin{equation}
x=\int_{y^{*}_{0}(t)+2\beta t}^{y+2\beta t} \left[\frac{w^{*}(\mu,t)}{1-\int_{0}^{t}\phi^{*}_{1}(\tau) d\tau+\frac{1}{D}\int_{\mu}^{y^{*}_{1}(t)} w^{*}(\xi,t)d\xi}+\beta\right] d\mu
\end{equation}
with
\[
y^{*}_{0}(t)<y<y^{*}_{1}(t),\quad\quad 0<t<\sigma
\]
and
\begin{equation}
s(t)=\int_{y^{*}_{0}(t)+2\beta t}^{y^{*}_{1}(t)+2\beta t} \left[\frac{w^{*}(\mu,t)}{1-\int_{0}^{t}\phi^{*}_{1}(\tau) d\tau+\frac{1}{D}\int_{\mu}^{y^{*}_{1}(t)} w^{*}(\xi,t)d\xi}+\beta\right] d\mu
\end{equation}
where  $w^{*}=w^{*}(y,t)$ is given by $(\ref{1z+})$, $y^{*}_{0}(t)$, $y^{*}_{1}(t)$ are given by $(\ref{ycero11})$ and $(\ref{ese11})$ respectively, with $\phi_{1}^{*}$, $\phi_{2}^{*}$ the unique solutions to  (\ref{ecintegralf})\ and (\ref{ecintegralf1}) corresponding to the  solution $h^{*}$ to equation $(\ref{ecuache}).$
\end{section}

\section*{ACKNOWLEDGEMENT}
The present work has been partially sponsored by the Project PIP No 0275 from CONICET-UA, Rosario, Argentina, ANPCyT PICTO Austral 2016 No 0090 and 
the European Union's Horizon 2020 Research and
Innovation Programme under the Marie Sklodowska-Curie grant agreement 823731 CONMECH. The authors would like to thanks the anonymous referees whose insighful comments have benefited the presentation of this article.

\end{document}